\theoremstyle{plain}
\newtheorem{thm}{Theorem}[section]
\newtheorem{thma}{Theorem}
\theoremstyle{plain}
\newtheorem{lem}[thm]{Lemma}
\theoremstyle{definition}
\newtheorem{defi}{Definition}[section]
\newcommand{\bn}{\mathbb{B}^{N}}
\newcommand{\rn}{\mathbb{R}^{N}}
\newcommand{\rnn}{\mathbb{R}^{2}}
\newcommand{\hn}{\mathbb{H}^{N}}
\newcommand{\Cbn}{C^{\infty}_c(\mathbb{B}^N)}
\newcommand{\Chn}{C^{\infty}_c(\mathbb{H}^N)}
\newcommand{\authorfootnotes}{\renewcommand\thefootnote{\@fnsymbol\c@footnote}}%
\numberwithin{equation}{section} \allowdisplaybreaks
\begin{document}
\title[Adams' inequality with exact growth in $\mathbb{H}^4$ and Lions lemma]
{Adams' inequality with exact growth in the hyperbolic space $\mathbb{H}^4$ and Lions lemma}

 \author{Debabrata Karmakar $^\dagger$}
\address{$^\dagger$ TIFR  Centre for Applicable Mathematics, Bangalore 560065, India}
 \thanks{$^\dagger$ TIFR-CAM, Post Bag No. 6503
Sharadanagar, Yelahanka New Town, Bangalore 560065.\\ Email: debkar@math.tifrbng.res.in.}

\begin{abstract}
  In this article we prove Adams inequality with exact growth condition in the four dimensional hyperbolic space 
  $\mathbb{H}^4,$
  \begin{align}
   \int_{\mathbb{H}^4} \frac{e^{32 \pi^2 u^2} - 1}{(1 + |u|)^2} \ dv_g \leq C ||u||^2_{L^2({\mathbb{H}^4})}.
   \end{align}
  $\mbox{for all} \ u \in C^{\infty}_c(\mathbb{H}^4) \ \mbox{with} \ \int_{\mathbb{H}^4} (P_2 u) u \ dv_g \leq 1.$ 

 \vspace{0.2 cm}
 
 We will also establish an Adachi-Tanaka type inequality in this settings. Another aspect of this article is the P.L.Lions 
 lemma in the hyperbolic space. We prove P.L.Lions lemma for the Moser functional and for a few cases of the Adams functional
 on the whole hyperbolic space.
\end{abstract}
\maketitle

\textbf{Keywords :} Hyperbolic spaces , Adams inequalities , Exact growth condition , Lions lemma.

\vspace{0.15 cm}

\textbf{Mathematics Subject Classification (2010) :} 26D10 , 46E35 , 46E30.

 \section{Introduction} 
 Let $\Omega$ be a bounded smooth domain in $\rn,$ and $k$ be a positive integer less than $N$ then 
 the limiting case of Sobolev embedding asserts that : when $kp = N,$  $W^{k,p}_0(\Omega)$ is continuously embedded in 
 $L^q(\Omega)$ for all $1 \leq q < + \infty,$ whereas $W^{k,p}_0(\Omega) \nsubseteq L^{\infty}(\Omega).$
 
 To answer the question of optimal embedding in the case $k = 1,$ Pohoza\'{e}v (\cite{Pohozaev}), Trudinger (\cite{Trudinger}) 
 and later  Moser (\cite{Moser}) proved, what is now popularly known as 
 Trudinger-Moser inequality. They established the following sharp inequality:
 \begin{align} \label{MT ineq}
  \sup_{u \in W^{1,N}_0(\Omega), \int_{\Omega} |\nabla u|^N \leq 1} \int_{\Omega} 
  e^{\beta |u|^{\frac{N}{N -1}}} \ dx < + \infty, \ \ \ \ \mbox{iff} \ \ \ \beta \leq \alpha_N,
 \end{align}\textit{}
 where $\alpha_N = N\omega_{N - 1}^{\frac{1}{N - 1}}$ and $\omega_{N - 1}$ is the surface measure of the 
 $(N - 1)$-dimensional unit sphere. The sharp constant $\alpha_N$ in \eqref{MT ineq} is due to Moser (\cite{Moser}).
 
 \vspace{0.2 cm}
 
 Trudinger-Moser inequality had its consequences in the study of PDE's with exponential nonlinearity, especially 
 the ones coming from geometry and physics. As a consequence, there has been many developments in this area since 
 1971. On one hand people were studying its validity in various domains (not necessarily of finite measures) and
 other geometries, many attentions had also been devoted to the possible generalizations and 
improvements.
  
  One interesting generalization in the whole $\rn$ is due to Adachi-Tanaka (\cite{AdachiTanaka}). 
 While studying the inequality
 \eqref{MT ineq} on $\rn,$ they understood, if one work with the gradient norm then it is not possible to get Trudinger-Moser
 inequality in its original form. The inequality they proved looks slightly different from \eqref{MT ineq}.
 They established the following inequality :
 
 \vspace{0.3 cm}
 
 Let $\Phi_{1,N}(t) := e^t - \sum_{j = 1}^{N - 2} \frac{t^j}{j\!},$ then for all $\beta \in (0,\alpha_N)$
 there exists $C(\beta) > 0$ such that :
 \begin{align} \label{Adachi-Tanaka}
  \int_{\rn} \Phi_{1,N}(\beta |u|^{\frac{N}{N - 1}}) \ dx \leq C(\beta) ||u||^N_{L^N(\rn)} \ \ \  \mbox{for all} \ 
  u \in W^{1,N}(\rn), \int_{\rn} |\nabla u|^N \leq 1,  
 \end{align}
and this is sharp in the sense that \eqref{Adachi-Tanaka} does not hold for any $\beta \geq \alpha_N.$
However, \eqref{MT ineq} type inequality does hold if we replace gradient norm by the full $W^{1,N}(\rn)$ norm
(see \cite{Ruf}, \cite{LiRuf}) and interestingly it can be derived from the inequality \eqref{Adachi-Tanaka}
(see \cite{MaSani}). 

In this context in dimension two S. Ibrahim-N.Masmoudi-K.Nakanishi(\cite{IMaN}) showed that one can achieve  
the best constant $4\pi$ in \eqref{Adachi-Tanaka} by weakening the exponential nonlinearity slightly. 
The same inequality was extended later by Masmoudi-Sani(\cite{MaSani}) for any $N \geq 2.$ They obtained the 
following exact growth condition for \eqref{Adachi-Tanaka} type inequality to hold with best constant $\alpha_N.$
\begin{thma}[\cite{MaSani}]
 Let $N \geq 2$ then there exists a constant $C_N>0$ such that
 \begin{align}
  \int_{\rn} \frac{\Phi_{1,N}(\alpha_N |u|^{\frac{N}{N - 1}})}{(1 + |u|)^{\frac{N}{N - 1}}} \leq C_N ||u||^N_{L^N(\rn)},
  \ \mbox{for all} \ u \in W^{1,N}(\rn), \int_{\rn} |\nabla u|^N \leq 1.
 \end{align}
Moreover, this inequality fails if the power $\frac{N}{N - 1}$ in the denominator is replaced by any $p < \frac{N}{N - 1}.$
\end{thma}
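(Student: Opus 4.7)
My first move is Schwarz symmetrization. Replacing $u$ by its symmetric decreasing rearrangement $u^{*}$ preserves $\|u\|_{L^N(\rn)}$, does not increase $\int_{\rn} |\nabla u|^N$ by P\'olya--Szeg\H{o}, and leaves the left-hand side invariant (its integrand being a monotone function of $|u|$), so I may assume $u=u(r)$ is radial and nonincreasing. The plan is then to apply the standard Moser substitution $u(r)=\alpha_N^{-(N-1)/N}\phi(t)$ with $t=N\log(r_0/r)$ normalized so that $\int_{\rn} |\nabla u|^N \leq 1$ becomes $\int_0^\infty |\phi'|^N\,dt \leq 1$ with $\phi(0)=0$ and $\phi$ nondecreasing. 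This reduces the inequality to a one-dimensional statement on $[0,\infty)$ against the weight $e^{-t}\,dt$.

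Next I would split $\rn$ into $\{u\leq 1\}$ and $\{u>1\}$. On $\{u\leq 1\}$ the Taylor expansion $\Phi_{1,N}(s)=\sum_{j\geq N-1} s^j/j!$ gives $\Phi_{1,N}(\alpha_N|u|^{N/(N-1)}) \leq C|u|^N$ while the denominator is at least $1$, so this piece contributes at most $C\|u\|_{L^N(\rn)}^N$ directly. The heart of the argument is the piece $\{u>1\}$. Translated to the $\phi$ variable, H\"older combined with $\phi(0)=0$ and $\int|\phi'|^N \leq 1$ yields $\phi(t)^{N/(N-1)}\leq t$, that is $e^{\phi^{N/(N-1)}-t} \leq 1$. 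I would further partition $[0,\infty)$ into the non-extremal regime, where $t-\phi^{N/(N-1)}$ is bounded below (so a residual exponential decay absorbs the exponential and the estimate collapses to a multiple of $\int \phi^N e^{-t}\,dt$, i.e.\ of $\|u\|_{L^N(\rn)}^N$), and the near-extremal regime, where $\phi$ is close to the Moser profile $t^{(N-1)/N}$ and the factor $(1+\phi)^{-N/(N-1)}$ must be used to convert a logarithmically borderline integral into one bounded by $\|u\|_{L^N(\rn)}^N$.

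The hard part will be that last near-extremal estimate, for two reasons. First, the power $N/(N-1)$ in the denominator has to be exploited exactly: the Moser profile saturates the pointwise bound $e^{\phi^{N/(N-1)}}\leq e^{t}$, so the weight $e^{-t}$ alone does not give summability, and an algebraic factor of order $t^{-1}$ (which is precisely what $(1+\phi)^{-N/(N-1)}$ produces on that profile) is what restores convergence. Any smaller exponent in $(1+|u|)^{p}$ loses this matching, and testing with a truncated Moser sequence would then drive the ratio between the two sides to infinity, establishing the sharpness claim for $p<N/(N-1)$ in the second assertion of the theorem. Second, I need to execute a level-set decomposition in $\phi$, in the style of Adams--Moser, and glue the two regimes without losing the sharp constant $\alpha_N$. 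Controlling constants across this gluing is the main technical burden; once resolved, the resulting estimate on $\{u>1\}$ combines with the elementary estimate on $\{u\leq 1\}$ to complete the proof.
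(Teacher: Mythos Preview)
The paper does not contain a proof of this statement. Theorem~A is quoted from \cite{MaSani} (and, for $N=2$, \cite{IMaN}) as a known result in the introduction and is used later only as a black box (e.g.\ through Theorem~\ref{MasoudiSani1} in the proof of Lemma~\ref{Adams inequality with exact growth lemma 2}). There is therefore no ``paper's own proof'' to compare your proposal against.

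That said, your outline is in the right spirit and broadly matches the strategy of the original Masmoudi--Sani argument: reduce by symmetric decreasing rearrangement and the Moser change of variables to a one-dimensional inequality for a nondecreasing $\phi$ with $\phi(0)=0$ and $\int_0^\infty |\phi'|^N\,dt\le 1$, dispose of the small-value regime by Taylor expansion, and isolate the near-extremal regime where $\phi(t)^{N/(N-1)}$ is close to $t$. Your heuristic that $(1+\phi)^{-N/(N-1)}$ supplies exactly the missing $t^{-1}$ factor on the Moser profile, and that any smaller exponent fails against truncated Moser sequences, is also the correct intuition for both the positive result and the sharpness claim.

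Where your write-up remains a plan rather than a proof is precisely the step you flag as hard: the near-extremal estimate. In the actual proof this is not handled by a soft ``level-set decomposition and gluing'' but by a sharp one-dimensional lemma (the key technical input of \cite{IMaN} for $N=2$ and \cite{MaSani} in general) controlling $\displaystyle\int_0^T \frac{e^{\phi^{N/(N-1)}-t}}{(1+\phi)^{N/(N-1)}}\,dt$ in terms of $\displaystyle\int_0^T \phi^N e^{-t}\,dt$ uniformly in $T$ under the constraint $\int_0^T |\phi'|^N\,dt\le 1$. Proving that lemma requires a careful optimal-profile analysis (essentially identifying the extremizer of the ratio and bounding the defect), and your sketch does not yet supply that argument. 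If you want to turn the proposal into a proof, that lemma is the piece to write out in full.
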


A great deal of literature is available on Trudinger-Moser inequality on the whole $\rn.$
For works related to Trudinger-Moser inequality on $\rn$ we refer to
\cite{Cao}, \cite{Panda},\cite{DoO}, \cite{Ruf}, \cite{LiRuf} and the references therein. Extensions 
to compact Reimannian manifolds are dealt with in \cite{Li1}, \cite{Li2}. See also \cite{AdiDruet}, \cite{AdiSandeep}
\cite{AdiYang}, \cite{Yang}
for many other variants of the inequality \eqref{MT ineq}.

\vspace{0.3 cm}

The study of Trudinger-Moser inequality on bounded domains with infinite measure is also a centre of 
attraction for the past few years.
One such example is the study of Trudinger-Moser inequality on the hyperbolic space. 
In fact Trudinger-Moser type inequality on the hyperbolic space was first studied by Mancini-Sandeep(\cite{MS}),
Adimurthi-Tinterev(\cite{AdiT}) in dimension two and Lu-Tang(\cite{LuT}) for dimension $N >2.$ They obtained 
sharp inequality in this setting with the same best constant as the one in the Euclidean space.

Let $\hn$ be the hyperbolic $N$-space with $N \geq 2$ then
\begin{align}
 \sup_{u \in \Chn, \int_{\hn} |\nabla_g u|_g^N \leq 1} \int_{\hn} \Phi_{1,N}(\beta |u|^{\frac{N}{N - 1}}) \ dv_g < + \infty,
 \ \ \ \mbox{for all} \ \beta \leq \alpha_N,
\end{align}
where $dv_g$ is the volume element and $\nabla_g$ is the hyperbolic gradient (see section 2 for definitions).

A comprehensive study of Trudinger-Moser inequality with exact growth condition and Adachi-Tanaka type inequality 
in the hyperbolic space were performed by Lu-Tang (\cite{LuTa}). They established the following sharp inequality: 

\begin{thma}[\cite{LuTa}]
 For any $u \in W^{1,N}(\hn)$ satisfying $||\nabla_g u||_{L^N(\hn)} \leq 1,$ there exists a constant 
 $C(N) > 0$ such that
 \begin{align}
  \int_{\hn} \frac{\Phi_{1,N}(\alpha_N |u|^{\frac{N}{N - 1}})}{(1 + |u|)^{\frac{N}{N - 1}}} \ dv_g 
  \leq C(N) ||u||^N_{L^N(\hn)}.
 \end{align}
\end{thma}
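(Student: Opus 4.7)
The plan is to adapt the Euclidean strategy of Masmoudi--Sani (Theorem A in the excerpt) to $\hn$, exploiting that Schwarz symmetrization is well defined on the hyperbolic space and does not spoil any of the quantities involved. First I would pass from an arbitrary $u \in W^{1,N}(\hn)$ with $\|\nabla_g u\|_{L^N(\hn)} \leq 1$ to its hyperbolic symmetric decreasing rearrangement $u^{\#}$, which is radial and non-increasing about some base point, preserves $\|u\|_{L^N(\hn)}$, and satisfies $\|\nabla_g u^{\#}\|_{L^N(\hn)} \leq 1$ by the P\'olya--Szeg\H{o} principle. Since the integrand on the left-hand side depends on $|u|$ only and $dv_g$ is rearrangement invariant, it is enough to prove the claim for such $u$.

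Next I would introduce polar coordinates: write $u=u(r)$, so that
\begin{equation*}
\|\nabla_g u\|_{L^N(\hn)}^N = \omega_{N-1} \int_0^\infty |u'(r)|^N \sinh^{N-1}(r)\,dr, \qquad \|u\|_{L^N(\hn)}^N = \omega_{N-1} \int_0^\infty |u(r)|^N \sinh^{N-1}(r)\,dr,
\end{equation*}
and perform a change of variables $r \mapsto s=s(r)$ designed to turn $\sinh^{N-1}(r)\,dr$ into the Euclidean-type measure $s^{N-1}\,ds$ on $(0,\infty)$. The natural choice is the volume-preserving substitution $\omega_{N-1} s^N/N = \mathrm{Vol}_g(B_r)$, which has the property that near the origin $s \sim r$ while for large $r$, $s \sim c_N e^{r}$. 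Setting $v(s)=u(r(s))$, the right-hand side $\|u\|_{L^N(\hn)}^N$ becomes exactly $\|v\|_{L^N(\rn)}^N$, and a chain-rule computation shows that the hyperbolic Dirichlet norm dominates a weighted Euclidean Dirichlet norm of $v$ with weight $\geq 1$, so in particular $\|\nabla v\|_{L^N(\rn)} \leq 1$.

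With the problem reduced to a one-dimensional statement for $v$, I would split $\hn$ (equivalently, the $s$-axis) into two regimes based on a cut-off $u \leq 1$ versus $u > 1$. On $\{u \leq 1\}$ the integrand is bounded by a constant multiple of $u^N$ and integrates to a constant times $\|u\|_{L^N(\hn)}^N$ at once. On $\{u > 1\}$ I would follow Masmoudi--Sani: foliate by the super-level sets $\{u > t\}$, change variables by the distribution function, and reduce the inequality to an exponential estimate on the half-line of the form
\begin{equation*}
\int_{0}^{\infty} \frac{\Phi_{1,N}(\alpha_N |w|^{N/(N-1)})}{(1+|w|)^{N/(N-1)}} \, d\tau \leq C \int_{0}^{\infty} |w|^N \, d\tau,
\end{equation*}
valid for absolutely continuous $w$ with $\int_0^\infty |w'|^N \, d\tau \leq 1$ and $w(\infty)=0$. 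This one-dimensional inequality follows from a direct computation by splitting $\tau$ into dyadic pieces according to the size of $w$.

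The main obstacle, in my view, is controlling the outer region where the hyperbolic measure grows exponentially. The change of variable inflates the weight, so one must verify that the extra weight $\sinh^{N-1}(r)/s^{N-1}$ only helps (i.e.\ is bounded below by $1$) in the relevant integrals on the gradient side, while not producing uncontrollable factors on the $L^N$ side. A secondary, but equally delicate, point is showing the exponent $N/(N-1)$ in the denominator is exactly what the proof forces: one tracks the loss of a polynomial factor when using the Young-type estimate that bounds $\Phi_{1,N}$ by the exponential minus explicit polynomial corrections, and checks that matching this loss against the exponential leaves precisely the power $N/(N-1)$.
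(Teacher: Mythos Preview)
The paper does not contain a proof of this statement: Theorem~B is quoted from Lu--Tang \cite{LuTa} as a known result and is used only as background motivation. There is therefore nothing in the present paper against which to compare your argument.

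That said, your outline is essentially the standard route for the first-order case and is, as far as I can tell, the one Lu--Tang follow: pass to the hyperbolic radial decreasing rearrangement via P\'olya--Szeg\H{o}, perform the volume-preserving change of variable $s^{N-1}\,ds=\sinh^{N-1}(r)\,dr$, observe that the Jacobian factor $(\sinh r/s)^{N-1}\ge 1$ so the Euclidean Dirichlet energy of $v$ is dominated by the hyperbolic one, and then invoke the one-dimensional Masmoudi--Sani lemma. One small slip: for large $r$ one has $s\sim c_N\,e^{(N-1)r/N}$, not $e^{r}$; this does not affect the argument since you only need the monotonicity and the inequality $\sinh r\ge s$, which follows from $\sinh^N r = N\int_0^r \sinh^{N-1}t\,\cosh t\,dt \ge N\int_0^r \sinh^{N-1}t\,dt = s^N$.

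It may be worth noting, for contrast, that the paper's own proof of the second-order analogue (Theorem~\ref{exact growth theorem 1}) proceeds very differently: there is no P\'olya--Szeg\H{o} principle available for $\Delta_g$, so the authors instead use a covering of $\hn$ by hyperbolic translates, local Euclidean estimates obtained via an extension operator, and the conformal invariance $\int_{\mathbb{H}^4}(P_2u)u\,dv_g=\int_{\mathbb{B}^4}|\Delta u|^2\,dx$. Your rearrangement approach is specific to the gradient case and would not transfer to that setting.
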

There are several articles on the Trudinger-Moser inequality on the hyperbolic space. We refer to
\cite{LuT}, \cite{MS}, \cite{MST}, \cite{Tin}, \cite{YZ} for related works on the hyperbolic space.

\vspace{0.3 cm}

Another interesting generalization of Trudinger-Moser inequality is due to Adams(\cite{A}) about the validity of 
\eqref{MT ineq} type inequality for higher order derivatives. 
He established exponential integrability of the functions belonging to the Sobolev space $W^{k,p}_0(\Omega)$ where $kp = N.$
He also found the best constant $\beta_0(k,N)$ in this case which matches with the best constant in Trudinger-Moser
inequality when $k = 1.$ His result can be formulated as follows : 

\begin{thma}[\cite{A}] \label{Adams}
Let $\Omega$ be a bounded domain in $\rn$ and $k < N.$ Then there exists a constant $C=C(k,N) > 0$ such that 
 \begin{align} \label{Adamsintegral}
 \sup_{u \in C^{k}_c(\Omega), \int_{\Omega} |\nabla^k u|^p \leq 1}\int_{\Omega} e^{\beta |u(x)|^{p^{\prime}}} \ dx \leq C |\Omega|,
\end{align}
for all $\beta \leq \beta_0(k,N),$ where $p = \frac{N}{k},p^{\prime} = \frac{p}{p - 1} $,
\begin{align} \label{beta0}
 \beta_0(k,N) = 
 \begin{cases}
  \frac{N}{\omega_{N-1}} \left[\frac{\pi^{\frac{N}{2}}2^k \Gamma\left(\frac{k + 1}{2}\right)}
  {\Gamma \left(\frac{N - k + 1}{2}\right)}\right]^{p^{\prime}}, \ \ \ \mbox{if $k$ is odd}, \\ \ \
  \frac{N}{\omega_{N-1}} \left[\frac{\pi^{\frac{N}{2}}2^k \Gamma\left(\frac{k}{2}\right)}
  {\Gamma \left(\frac{N - k }{2}\right)}\right]^{p^{\prime} }, \ \ \ \mbox{if $k$ is even}, \\
  \end{cases}
\end{align}
and $ \nabla^k $ is defined by
 \begin{align}\label{higra}
  \nabla^k :=
  \begin{cases}
   \Delta^{\frac{k}{2}} , \ \ \ \ \ \ \ \mbox{if} \ k \ \mbox{is even}, \\
   \nabla \Delta^{\frac{k-1}{2}} , \ \ \mbox{if} \ k \ \mbox{is odd}.
  \end{cases}
\end{align}
Furthermore, if $\beta > \beta_0,$ then the supremum in \eqref{Adamsintegral} is infinite.
\end{thma}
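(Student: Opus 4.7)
The plan is to follow Adams' original strategy \cite{A}: express $u$ as a Riesz-type potential of its top-order derivative $\nabla^k u$, reduce the resulting integral inequality to a sharp one-dimensional estimate via rearrangement, and verify sharpness using an explicit extremal sequence of radial bumps. For the representation step, convolve $\nabla^k u$ against the fundamental solution $c_{k,N}|x|^{k-N}$ of $\Delta^{k/2}$ when $k$ is even, and apply an additional Riesz transform when $k$ is odd; in both cases one obtains the pointwise bound
\begin{equation*}
|u(x)| \leq A(k,N) \int_{\mathbb{R}^N} |x-y|^{k-N} |\nabla^k u(y)| \, dy,
\end{equation*}
with $A(k,N)$ explicit in terms of Gamma functions. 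It is precisely this constant that determines $\beta_0(k,N)$ via the identity $A(k,N)^{p'} \beta_0(k,N) = N/\omega_{N-1}$, which reproduces the formula \eqref{beta0}.

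For the rearrangement step, set $g := |\nabla^k u|$, denote by $g^*$ its Schwarz symmetrization, and apply the O'Neil--Adams rearrangement inequality for convolutions. After the logarithmic substitution $t := \log\bigl(N|\Omega|(\omega_{N-1}|x|^N)^{-1}\bigr)$ and an analogous $s$ for the integration variable, together with the rescaling $\phi(s) := g^*(y)(N/\omega_{N-1})^{-1/p} e^{-s/p}$, one obtains $\|\phi\|_{L^p(\mathbb{R}_+)} \leq 1$ and the inequality reduces to
\begin{equation*}
\int_0^{\infty} \exp\bigl( \beta_0 A(k,N)^{p'} |F(t)|^{p'} - t \bigr) \, dt \leq C,
\end{equation*}
where $F(t) := \int_0^\infty K(s,t)\phi(s) \, ds$ with $K$ a Hardy-type kernel normalized at the critical value $\sup_t \bigl(\int_t^{\infty} |K(s,t)|^{p'}\, ds\bigr)^{1/p'} = 1$.

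The analytic core is Adams' sharp one-dimensional lemma, which asserts that the displayed integral is uniformly bounded whenever $\beta_0 A(k,N)^{p'} \leq 1$. The proof splits $F(t) = F_1(t) + F_2(t)$ according to $s < t$ versus $s > t$, absorbs $F_2$ by H\"older's inequality (harmless thanks to the critical normalization of $K$), and treats $F_1$ via a level-set peeling argument that converts the residual $L^p$-norm information on $\phi$ into a geometric series summable by the $e^{-t}$ weight. Sharpness in $\beta$ is then confirmed by testing against the Adams--Moser family, namely smoothed versions of $|x|^{k-N/p}$ concentrated near a point, which keeps $\int |\nabla^k u|^p$ bounded while driving the exponential integral to infinity whenever $\beta > \beta_0(k,N)$.

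I expect the main obstacle to be the sharp one-dimensional lemma: the Gamma-function constant must be pinned down exactly, so every inequality in the splitting of $F$ and in the passage through rearrangement must be tight at the critical radial profile. Any loss at an intermediate stage produces only a strictly smaller exponent than $\beta_0(k,N)$, and the intricate explicit formula \eqref{beta0} emerges precisely from the exact matching between the Riesz constant $A(k,N)$ from Step 1 and the Hardy-type normalization of the kernel $K$ in Step 2.
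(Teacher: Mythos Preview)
The paper does not prove this statement at all: Theorem~\ref{Adams} is quoted from Adams' original paper~\cite{A} as background in the introduction, with no proof given here. Your outline is a faithful sketch of Adams' own argument (Riesz potential representation, O'Neil rearrangement, the sharp one-dimensional lemma, and the Moser-type extremal sequence for sharpness), so there is nothing in the present paper to compare it against.
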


Like Trudinger-Moser inequality, Adams inequality also attracted many improvements and generalizations. 
Extensions of \eqref{Adamsintegral} to the whole $\rn$
was studied by Ruf-Sani (\cite{RufSani}), Lam-Lu (\cite{LamLu}). Tarsi (\cite{tar}) showed Adams type inequality holds 
for more general class of functions which contains $W^{k, \frac{N}{k}}_0(\Omega)$ as a closed subspace. 
A complete generalizations of Adams inequality on compact Riemannian manifolds is due to Fontana(\cite{Fonta}). 
See also \cite{FntanaM} for Adams type inequality on arbitrary  measure spaces. 
Recently Fontana(\cite{Fontana15}) proved Adams inequality on the hyperbolic space. 

Another interesting aspect of Adams inequalities is to look for exponential integrability under the constrained 
which governs critical $Q_{\frac{N}{2}}$ curvature. In fact in \cite{SD}  Adams type inequality
on the hyperbolic space were considered under the condition $\int_{\hn} (P_k u)u \ dv_g \leq 1$, where 
$P_k$ is the $2k$-th order GJMS operator (see section 2).  They established the following sharp inequality :
\begin{thma}[\cite{SD}]\label{HYADA}
Let $\hn$ be the $N$ dimensional hyperbolic space with $N$ even and $k =\frac N2$ then,
 \begin{align}
  \sup_{u \in \Chn , ||u||_{k,g} \leq 1} \ \int_{\hn} \left(e^{\beta u^2} - 1\right) \ dv_g < +\infty
 \end{align}
iff $\beta \leq \beta_0(k,N)$, where $\beta_0(k,N)$ is as defined in \eqref{beta0} and $||u||_{k,g}$ is the norm defined by
\begin{align} 
 ||u||_{k,g} := \left[\int_{\hn} (P_k u)u \ dv_g\right]^{\frac 12}, \ \mbox{for all} \ \ u \in \Chn,
\end{align}
where $P_k$ is the $2k$-th order GJMS operator on the hyperbolic space $\hn$.
\end{thma}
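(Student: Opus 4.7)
The plan is to exploit the conformal invariance of the critical GJMS operator ($k = N/2$) to reduce the hyperbolic problem to a weighted Euclidean one on the Poincar\'e ball, apply hyperbolic symmetrization to radialize, and then invoke a sharp one-dimensional Adams-type estimate. In the ball model $\hn \cong (B_1, g)$ with $g = \left(\frac{2}{1-|x|^2}\right)^2 \delta$, write $g = e^{2w}\delta$ with $e^w = 2/(1-|x|^2)$. The transformation rule for the critical GJMS operator, $P_{N/2}^{e^{2w}\delta} u = e^{-Nw} P_{N/2}^{\delta} u$, combined with $dv_g = e^{Nw} dx$ gives
\begin{equation*}
\|u\|_{k,g}^2 = \int_{\hn} (P_k u) u \, dv_g = \int_{B_1} ((-\Delta)^k u) u \, dx = \int_{B_1} |\nabla^k u|^2 \, dx,
\end{equation*}
for every $u \in \Chn \cong C^\infty_c(B_1)$, where $\nabla^k$ is as in \eqref{higra}. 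Hence the constraint is the ordinary Euclidean Dirichlet integral, and the inequality to establish is
\begin{equation*}
\int_{B_1} (e^{\beta u^2} - 1)\left(\frac{2}{1-|x|^2}\right)^N dx \leq C
\end{equation*}
uniformly for $u \in C^\infty_c(B_1)$ with $\int_{B_1}|\nabla^k u|^2 \leq 1$; the boundary weight $(1-|x|^2)^{-N}$ encodes the unbounded hyperbolic volume.

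For the upper bound when $\beta \leq \beta_0(k,N)$, I would first reduce to radial functions via hyperbolic symmetric decreasing rearrangement $u \mapsto u^*$, which preserves $\int (e^{\beta u^2}-1) dv_g$ and, granted a Polya-Szego inequality for $P_k$, does not increase $\|u\|_{k,g}$. To obtain the Polya-Szego on $\hn$, I would use Branson's product factorization $P_k = \prod_{j=1}^k (-\Delta_g + c_j)$ for explicit positive constants $c_j$ and iterate the Polya-Szego inequality for second-order Schr\"odinger-type operators on $\hn$. Once radial, the problem collapses to a weighted 1D Adams-type estimate on $(0,\infty)$ against the measure $\omega_{N-1}\sinh^{N-1}(\rho) d\rho$; the logarithmic short-range singularity of the Green's kernel of $P_k^{1/2}$ on $\hn$ exactly matches the Euclidean Adams calculation and yields the sharp constant $\beta_0(k,N)$, while exponential decay of the kernel at infinity controls the unbounded-volume region via an O'Neil--Adams-Fontana convolution inequality. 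For sharpness ($\beta > \beta_0$), I would test against the standard Moser-type concentrating sequence built in normal coordinates at a fixed point $x_0 \in \hn$ from truncated fundamental solutions of $(-\Delta)^k$; since the hyperbolic and Euclidean metrics agree to leading order near $x_0$, the Euclidean divergence computation of Adams carries through and forces the supremum to be infinite.

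\textbf{Main obstacle.} The decisive technical point is the Polya-Szego inequality for the higher-order operator $P_k$ on $\hn$. For $k = 1$ it is classical hyperbolic rearrangement, but for $k \geq 2$ the factor-by-factor iteration demands careful bookkeeping of signs, intermediate integrability, and behavior at the ideal boundary. A more robust alternative avoiding symmetrization is the direct kernel route: represent $u = K \ast f$ with $\|f\|_{L^2(\hn)} = \|u\|_{k,g}$ via the kernel $K$ of $P_k^{-1/2}$, establish sharp pointwise bounds on $K$ using Helgason--Fourier analysis on $\hn$, and invoke the Adams--Fontana sharp convolution inequality; this mirrors Fontana's strategy on compact manifolds and would be my fallback if the factorized Polya-Szego step proves too delicate.
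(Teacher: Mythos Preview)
The statement you address (Theorem~\ref{HYADA}) is not proved in the present paper; it is quoted from \cite{SD} as background.  Nevertheless, from the auxiliary lemmas the paper imports from \cite{SD} (Lemmas~\ref{basicl2}, \ref{basicl4}, \ref{basicl5}) and from the way they are assembled in the proof of Theorem~\ref{exact growth theorem 1} here, one can reconstruct the strategy of \cite{SD}: your conformal identity $\|u\|_{k,g}^2 = \int_{\bn}|\nabla^k u|^2\,dx$ is correct and is indeed the starting point (it is invoked verbatim in the proof of Theorem~\ref{exact growth theorem 1}), but after that \cite{SD} does \emph{not} symmetrize.  Instead it covers $\hn$ by hyperbolic translates $\tau_{b_i}(V)$ of a fixed small ball with bounded overlap, splits the index set into finitely many ``bad'' translates (where the local $H^k_g$-norm is large, handled by the conformal identity plus the Euclidean Adams inequality on $\bn$) and the remaining ``good'' ones (where a small local norm yields a summable exponential bound via Lemma~\ref{basicl4}).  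No rearrangement and no sharp Green-kernel asymptotics are used.

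Your primary route has a genuine gap that you partly anticipate.  A P\'olya--Szeg\H{o} inequality for $P_k$, $k\ge 2$, is not available: already in the Euclidean case $\int|\Delta u^{*}|^2 \le \int|\Delta u|^2$ fails in general, which is exactly why Adams bypassed symmetrization in favor of Riesz-potential representations and O'Neil's inequality.  The iteration you propose via the factorization $P_k=\prod_j(-\Delta_g+c_j)$ does not repair this: the single-factor P\'olya--Szeg\H{o} bound $\int|\nabla_g u^{*}|^2+c_j\int(u^{*})^2\le \int|\nabla_g u|^2+c_j\int u^2$ is fine, but iterating would require $( (-\Delta_g+c_j)u)^{*}$ to control $(-\Delta_g+c_j)(u^{*})$, and symmetrization does not commute with differential operators.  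Your fallback---represent $u$ via the kernel of $P_k^{-1/2}$ and apply an Adams--Fontana sharp convolution inequality on $\hn$---is viable and is essentially the approach of \cite{Fontana15}, but it is a heavier and genuinely different argument from the elementary covering-plus-local-estimate scheme of \cite{SD}.
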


Coming back to the discussion in $\rn,$ one can ask : does Adachi-Tanaka type inequality  holds for 
higher order derivatives, or what is the exact growth
condition for the Adams functional in $\rn?$ 
In fact very few results is known  in this 
direction. In $\mathbb{R}^4$ the question about exact growth have been answered by Masmoudi and Sani(\cite{MSani}).

\begin{thma} [\cite{MSani}]\label{MasoudiSani1}
 There exists a constant $C>0$ such that
 \begin{align}
  \int_{\mathbb{R}^4} \frac{e^{32\pi^2u^2} - 1}{(1 + |u|)^2} \ dx \leq C||u||^2_{L^2(\mathbb{R}^4)},
 \end{align}
for all  $u \in W^{2,2}(\mathbb{R}^4)$ with $||\Delta u||_{L^2(\mathbb{R}^4)} \leq 1.$
Moreover, this fails if the power $2$ in the denominator is replaced with any $p < 2.$
\end{thma}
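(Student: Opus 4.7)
The plan is threefold: symmetrize, reduce to a $1$D weighted inequality via a logarithmic substitution, and prove the latter with a level-set argument keyed to a sharp pointwise Adams-type bound. To start, write $f := -\Delta u$ so that $u = G \ast f$ with $G(x) = c_{4}|x|^{-2}$ the Green kernel of $-\Delta$ on $\mathbb{R}^{4}$. Riesz's rearrangement inequality yields $|u|^{\#} \leq c_{4}|\cdot|^{-2} \ast |f|^{*} =: v$, a radial nonincreasing function satisfying $-\Delta v = |f|^{*}$; thus $\|\Delta v\|_{2} \le 1$, $\|v\|_{2} \ge \|u\|_{2}$, and monotonicity of $s \mapsto (e^{32\pi^{2}s^{2}}-1)(1+s)^{-2}$ on $[0,\infty)$ ensures the target integral only grows under $u \rightsquigarrow v$. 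Hence we may assume $u$ is radial and radially nonincreasing.

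\emph{Reduction to a $1$D weighted inequality.} Setting $\phi(t) := u(e^{-t/4})$ for $t \in \mathbb{R}$ and using $\Delta u = u'' + 3u'/r$, a direct computation gives
\[
\|\Delta u\|_{L^{2}(\mathbb{R}^{4})}^{2} = 32\pi^{2}\int_{\mathbb{R}}\bigl(2\phi''-\phi'\bigr)^{2}\,dt, \qquad \|u\|_{L^{2}(\mathbb{R}^{4})}^{2} = \tfrac{\pi^{2}}{2}\int_{\mathbb{R}}\phi^{2}\,e^{-t}\,dt,
\]
and the target integral equals $\tfrac{\pi^{2}}{2}\int_{\mathbb{R}}(e^{32\pi^{2}\phi^{2}}-1)(1+|\phi|)^{-2}\,e^{-t}\,dt$. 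After the rescaling $\phi \mapsto \sqrt{32\pi^{2}}\,\phi$ (which only changes the denominator by a bounded factor), Theorem~\ref{MasoudiSani1} is equivalent to the following one-dimensional estimate: for every $\phi$ satisfying $\int_{\mathbb{R}}(2\phi''-\phi')^{2}\,dt \le 1$,
\[
\int_{\mathbb{R}}\frac{e^{\phi(t)^{2}}-1}{(1+|\phi(t)|)^{2}}\,e^{-t}\,dt \;\le\; C\int_{\mathbb{R}}\phi(t)^{2}\,e^{-t}\,dt.
\]

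\emph{The $1$D estimate --- main obstacle.} Let $L := 2\partial_{t}^{2}-\partial_{t}$, whose Green's kernel $K(t,s)$ is explicit. Representing $\phi$ as a convolution of $K$ against $L\phi$ and applying Cauchy--Schwarz, one aims to derive the sharp pointwise Adams-type bound
\[
\phi(t)^{2} \;\le\; t + 2\log\!\bigl(1+|\phi(t)|\bigr) + O(1)
\]
whenever $|\phi(t)|$ is large. Once this is in hand, decompose $\mathbb{R} = A \cup A^{c}$ with $A := \{t : \phi(t)^{2} \le M\}$ for a fixed large $M$: on $A$ the Taylor bound $e^{\phi^{2}}-1 \le C_{M}\phi^{2}$ closes the estimate immediately, while on $A^{c}$ the pointwise bound forces $(e^{\phi^{2}}-1)(1+|\phi|)^{-2} \le C\,e^{t}$, so the integrand on $A^{c}$ is bounded by a constant after multiplication by $e^{-t}$. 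Since the pointwise bound also confines $A^{c}$ to $\{t \geq T_{0}\}$ for some $T_{0}$, and $\phi^{2} \gtrsim 1$ on $A^{c}$, the contribution from $A^{c}$ is absorbed into $\int_{A^{c}}\phi^{2}e^{-t}\,dt$. The technical heart is establishing the pointwise bound with precisely the logarithmic correction $2\log(1+|\phi|)$ matching the denominator power $2$; this requires splitting the integral representation of $\phi(t)$ into suitable ranges of $s$ and performing the Cauchy--Schwarz step so that no constant is wasted --- exactly here the second-order nature of $L$ dictates the exponent $2$.

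\emph{Sharpness.} Test against a radial Adams--Moser concentrating sequence $u_{n} \in W^{2,2}(\mathbb{R}^{4})$, built from truncated logarithms and normalized so that $\|\Delta u_{n}\|_{2} = 1$ and $u_{n}(0) \sim (\log n / (16\pi^{2}))^{1/2}$. A direct computation then shows that if the denominator is replaced by $(1+|u|)^{p}$ with $p < 2$ the left-hand side diverges while $\|u_{n}\|_{L^{2}}$ stays bounded, so the exponent $2$ cannot be lowered.
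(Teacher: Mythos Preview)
This theorem is not proved in the present paper at all; it is quoted from Masmoudi--Sani \cite{MSani} as Theorem~\ref{MasoudiSani1} in the introduction and then invoked as a black box in the proof of Lemma~\ref{Adams inequality with exact growth lemma 2}. So there is no ``paper's own proof'' to compare your attempt against.

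That said, your outline does follow the broad architecture of the original Masmoudi--Sani argument (symmetrize via a Talenti-type comparison, pass to a one-dimensional weighted inequality through a logarithmic change of variables, and prove the latter by a sharp pointwise bound combined with a level-set splitting). Your computations for the $1$D reduction are correct.

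However, there is a genuine gap in your symmetrization step. You construct a radial nonincreasing $v$ with $\|\Delta v\|_{2}\le 1$, observe that the left-hand side only increases under $u\rightsquigarrow v$, and note that $\|v\|_{2}\ge\|u\|_{2}$; from this you conclude ``hence we may assume $u$ is radial and radially nonincreasing.'' But the implication fails: proving $\mathrm{LHS}(v)\le C\|v\|_{2}^{2}$ for the radial comparison function only yields $\mathrm{LHS}(u)\le C\|v\|_{2}^{2}$, and since $\|v\|_{2}\ge\|u\|_{2}$ this does \emph{not} give $\mathrm{LHS}(u)\le C\|u\|_{2}^{2}$. Both sides of the inequality move in the same direction under your replacement, so the ratio is uncontrolled. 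In \cite{MSani} this is circumvented by keeping $\|u\|_{L^{2}}$ on the right throughout: one uses the comparison $u^{\#}\le v^{\#}$ only to control the integrand on the set where $|u|$ is large, and bounds the contribution of that set by its measure, which is in turn dominated by $\|u\|_{2}^{2}$ via Chebyshev. The one-dimensional estimate must therefore be formulated and proved more carefully than the inequality you display, tracking $\|u\|_{2}$ rather than $\|v\|_{2}$.

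Your ``main obstacle'' paragraph is also only a plan: you assert the pointwise bound $\phi(t)^{2}\le t+2\log(1+|\phi(t)|)+O(1)$ without indicating how the Green's kernel of $L=2\partial_{t}^{2}-\partial_{t}$ produces precisely this logarithmic correction, and the level-set argument needs $\int_{A^{c}}e^{-t}\,dt$ to be controlled by $\int\phi^{2}e^{-t}\,dt$, which does not follow from what you have written. These are exactly the delicate points of the Masmoudi--Sani proof.
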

They also proved  Adachi-Tanaka type inequality in this setting.
\begin{thma} [\cite{MSani}]\label{MasoudiSani2}
 For any $\alpha \in (0, 32\pi^2),$ there exists a constant $C(\alpha) > 0$ such that
 \begin{align}
  \int_{\mathbb{R}^4} \left(e^{\alpha u^2} - 1\right) \ dx \leq C(\alpha)||u||^2_{L^2(\mathbb{R}^4)},
 \end{align}
for all  $u \in W^{2,2}(\mathbb{R}^4)$ with $||\Delta u||_{L^2(\mathbb{R}^4)} \leq 1,$ and this inequality
fails for any $\alpha \geq 32\pi^2.$
\end{thma}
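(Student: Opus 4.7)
The plan is to deduce the Adachi--Tanaka type bound from the exact growth inequality of Theorem \ref{MasoudiSani1} by a level-set splitting argument, and to establish sharpness via the standard Adams extremizing sequence built from the fundamental solution of $\Delta^2$ in $\mathbb{R}^4$.

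For the inequality, fix $\alpha \in (0,32\pi^2)$ and choose $M = M(\alpha) > 0$ large enough that $(1+t)^2 \leq e^{(32\pi^2 - \alpha) t^2}$ for every $t \geq M$; this is possible since $32\pi^2 - \alpha > 0$. Decompose $\mathbb{R}^4 = \{|u|\leq M\} \cup \{|u|>M\}$. On the sublevel set, the monotonicity of $s \mapsto (e^{\alpha s} - 1)/s$ for $s>0$ yields
\begin{align*}
e^{\alpha u^2} - 1 \leq \frac{e^{\alpha M^2}-1}{M^2}\,u^2,
\end{align*}
so that part is immediately bounded by $C(\alpha,M)\|u\|_{L^2}^2$. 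On the superlevel set, the choice of $M$ gives $e^{\alpha u^2}\leq (1+|u|)^{-2}e^{32\pi^2 u^2}$, hence
\begin{align*}
\int_{\{|u|>M\}}(e^{\alpha u^2}-1)\,dx \leq \int_{\mathbb{R}^4}\frac{e^{32\pi^2 u^2}-1}{(1+|u|)^2}\,dx + \int_{\{|u|>M\}}\frac{dx}{(1+|u|)^2},
\end{align*}
where the first term is controlled by $C\|u\|_{L^2}^2$ via Theorem \ref{MasoudiSani1}, and the second is bounded by $M^{-4}\|u\|_{L^2}^2$ via Chebyshev's inequality. Summing yields the claim.

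For sharpness at $\alpha \geq 32\pi^2$, I would construct a concentrating Adams bubble in $\mathbb{R}^4$. Guided by the behaviour of the fundamental solution of $\Delta^2$, define $u_n$ as a smooth truncation of $(32\pi^2)^{-1/2}\log(1/|x|)$ supported in a small ball and concentrating at scale $r_n \to 0$, normalized so that $\|\Delta u_n\|_{L^2} = 1$. A standard calibration gives $u_n(0)\to\infty$ and $\|u_n\|_{L^2}^2 \to 0$, while $\int_{\mathbb{R}^4}(e^{32\pi^2 u_n^2}-1)\,dx$ stays bounded below by a positive constant, making the ratio blow up. This parallels the classical Adams extremizing sequence used for sharpness in \eqref{Adamsintegral}, and for $\alpha>32\pi^2$ the failure is even more immediate by standard rescaling of the same construction.

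The upper bound is technically robust once Theorem \ref{MasoudiSani1} is in hand: the only mild subtlety is using Chebyshev to absorb $\int_{\{|u|>M\}}(1+|u|)^{-2}\,dx$ into $\|u\|_{L^2}^2$ and checking that replacing $e^{32\pi^2 u^2}$ by $e^{32\pi^2 u^2}-1$ on the superlevel set causes no loss (the discrepancy being absorbed into that same tail estimate). The real obstacle lies in the sharpness direction: one must tune the concentrating sequence so that its $L^2$-norm vanishes at precisely the right rate while the exponential integral remains bounded below, and this delicate calibration at the critical exponent is where most of the work in \cite{MSani} goes.
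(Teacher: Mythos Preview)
The paper does not prove this statement: Theorem~\ref{MasoudiSani2} is quoted from \cite{MSani} as background and no argument for it appears anywhere in the text, so there is no in-paper proof to compare yours against. Your derivation of the inequality from the exact-growth Theorem~\ref{MasoudiSani1} via the level-set split $\{|u|\le M\}\cup\{|u|>M\}$ is correct and is the standard route; the sharpness sketch is also the right construction, though (as you note) the calibration of the Adams sequence is where the work lies.

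It is worth remarking that the paper \emph{does} prove the hyperbolic analogue, Theorem~\ref{exact growth theorem 2}, but by a genuinely different mechanism: rather than reducing to the exact-growth inequality, it covers $\mathbb{H}^4$ by hyperbolic translates of a fixed pair $V\subset U$, applies a local Adachi--Tanaka estimate (Lemma~\ref{Adams inequality with exact growth lemma 3}) on the translates where $\|u\circ\tau_{b_i}\|_{H^2_g(U)}$ is small, and handles the finitely many ``bad'' translates via the conformal identity $\int_{\mathbb{H}^4}(P_2u)u\,dv_g=\int_{\mathbb{B}^4}|\Delta u|^2\,dx$ together with the Euclidean Theorem~\ref{MasoudiSani2} itself. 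Your level-set argument, transported to $\mathbb{H}^4$, would give an alternative and shorter proof of Theorem~\ref{exact growth theorem 2} directly from Theorem~\ref{exact growth theorem 1}, bypassing the covering lemma entirely; the paper's route has the advantage of being self-contained modulo local Euclidean estimates, while yours makes the logical dependence on exact growth transparent.
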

 
One of the goal of this article is to address these two types of inequalities in the hyperbolic space. We 
 will establish the exact growth condition (Theorem \ref{exact growth theorem 1}) and also 
Adachi-Tanaka type inequality (Theorem \ref{exact growth theorem 2})  for functions belonging to $H^2(\mathbb{H}^4)$
(see main results for precise statement).

 Another aspect of the Trudinger-Moser or Adams inequalities is the concentration compactness phenomena. P.L.Lions in his 
celebrated paper \cite{P.L.L} proved concentration compactness alternatives for the Moser functional.
Among many other results he proved the following : 
 
 If a sequence $\Big\{u_m : ||\nabla u_m||_{L^N(\Omega)} = 1 \Big\}$ in $W^{1,N}_0(\Omega)$ converges weakly to $u$ 
 in $W^{1,N}_0(\Omega)$ with $u \not\equiv 0,$ then
 \begin{align} \label{P.L.Lions lemma}
  \sup_m \int_{\Omega} e^{\alpha_N p |u_m|^{\frac{N}{N - 1}}} \ dx < + \infty,
  \ \ \ \mbox{for all} \ p < \left(1 - ||\nabla u||_{L^N}^N\right)^{-\frac{1}{N - 1}}.
   \end{align}
 
The inequality  \eqref{P.L.Lions lemma} does not give any extra information than 
the Trudinger-Moser inequality if the sequence converges weakly to zero, but the implication of the above lemma
is that the critical Moser functional is compact outside a weak neighbourhood of zero. We refer \cite{CCH}
for a detailed discussions on the P.L.Lions lemma and their generalizations to the functions with
unrestricted boundary condition.

The concentration compactness alternatives for Adams functional has been carried out by M. do \'{O}-Macedo
(\cite{DoMac}). Recently P.L.Lions lemma  for the Moser functional has been extended on whole $\rn$ 
by M. do. \'{O} et. al.(\cite{DoOetal}). To the best of our knowledge, for higher order derivatives
P.L.Lions type lemma on domain with infinite measure  is still an open question except for few cases (\cite{YYang1}).

 Another goal of this article is to cast P.L.Lions type lemma on the hyperbolic space. We prove P.L.Lions type lemma 
for two different settings for the Adams functional (see Theorem \ref{main pll 2} and Theorem \ref{main pll 3}). 
We also established analogous results for the Moser functional on the whole $\hn$ (Theorem \ref{main pll1}).

The followings are the main results of this article:

\pagebreak 
\begin{center}
 \textbf{Main Results}
\end{center}

The first two results are concerning the exact growth condition in $H^2(\mathbb{H}^4).$
 \begin{thm} \label{exact growth theorem 1}
There exists a constant $C>0$ such that for all $u \in C^{\infty}_c(\mathbb{H}^4)$ with \\
$\int_{\mathbb{H}^4} P_2(u)u \ dv_g \leq 1,$ there holds,
\begin{align} \label{Adams inequality with exact growth}
   \int_{\mathbb{H}^4} \frac{e^{32 \pi^2 u^2} - 1}{(1 + |u|)^2} \ dv_g \leq C ||u||^2_{L^2({\mathbb{H}^4})}.
\end{align}
Moreover, this is optimal in the sense that if we consider
\begin{align}
   \int_{\mathbb{H}^4} \frac{e^{\beta u^2} - 1}{(1 + |u|)^p} \ dv_g ,
\end{align}
then \eqref{Adams inequality with exact growth} fails to hold either for $\beta > 32 \pi^2$ and $p = 2$ or,
$\beta = 32\pi^2$ and $p < 2.$
\end{thm}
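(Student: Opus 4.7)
The plan is to adapt Masmoudi--Sani's proof of Theorem \ref{MasoudiSani1} in $\mathbb{R}^4$ to the hyperbolic setting, using the conformal invariance of the Paneitz operator $P_2$ in dimension four, in the spirit of Lu--Tang's hyperbolic extension of the first-order case. The argument has three main components: a reduction to radial functions via rearrangement, a splitting-type estimate combining the critical Adams bound with a Euclidean comparison on the bulk, and optimality via concentrating Moser--Adams sequences.

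\textbf{Symmetrization.} I would first reduce to hyperbolic-radial non-increasing $u$ via the hyperbolic decreasing rearrangement $u \mapsto u^{\sharp}$. The $L^2(\mathbb{H}^4)$ norm and the left-hand integrand (which depends only on $|u|$) are preserved, and the key claim is $\int_{\mathbb{H}^4} P_2 u^{\sharp} \cdot u^{\sharp} \, dv_g \leq \int_{\mathbb{H}^4} P_2 u \cdot u \, dv_g$. Since rearrangement inequalities for fourth-order operators are delicate, one exploits the factorization $P_2 = (-\Delta_g)(-\Delta_g - 2)$ on $\mathbb{H}^4$, where both factors are strictly positive (the spectral gap $(N-1)^2/4 = 9/4$ of $-\Delta_g$ ensures $-\Delta_g - 2 \geq 1/4$), and applies a Maz'ya--Talenti-type comparison iteratively, or alternatively uses a convolution representation via the Green's function of $P_2$ together with an O'Neil-type rearrangement inequality.

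\textbf{Core estimate.} With $u$ radial non-increasing, I would distinguish two regimes based on $\|u\|^2_{L^2(\mathbb{H}^4)}$. \emph{If} $\|u\|^2_{L^2(\mathbb{H}^4)} \geq 1$, then Theorem \ref{HYADA} already gives $\int_{\mathbb{H}^4}(e^{32\pi^2 u^2}-1)\, dv_g \leq C$, which bounds the left-hand side of \eqref{Adams inequality with exact growth} by $C \leq C\|u\|^2_{L^2(\mathbb{H}^4)}$. \emph{If} $\|u\|^2_{L^2(\mathbb{H}^4)} < 1$, fix a hyperbolic radius $R_0 > 0$ and split $\mathbb{H}^4$ into a bulk $\{r \leq R_0\}$ and a tail $\{r > R_0\}$. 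On the bulk, the conformal invariance of $P_2$ in dimension four gives $\int_{\mathbb{H}^4} P_2 u \cdot u \, dv_g = \int_{B^4}|\Delta u|^2 \, dx$ in the Poincar\'e ball model (for $u \in C^{\infty}_c$), and the conformal factor $p = 2/(1-|x|^2)$ is bounded on the corresponding Euclidean region $\{|x| \leq \tanh(R_0/2)\}$; applying Theorem \ref{MasoudiSani1} to the zero-extension of $u$ to $\mathbb{R}^4$ then controls the bulk by $C\|u\|^2_{L^2(\mathbb{H}^4)}$. On the tail, the pointwise radial bound $u(r) \leq \|u\|_{L^2(\mathbb{H}^4)} / \mathrm{vol}_g(B_r)^{1/2}$ together with $\|u\|_{L^2(\mathbb{H}^4)} < 1$ yields a uniform $L^{\infty}$-bound on $u$ in $\{r > R_0\}$, after which the elementary estimate $(e^{32\pi^2 u^2}-1)/(1+|u|)^2 \leq C u^2$ for $|u|$ bounded controls the tail by $\|u\|^2_{L^2(\mathbb{H}^4)}$.

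\textbf{Optimality.} The case $\beta > 32\pi^2$ is immediate from Theorem \ref{HYADA}, since the associated supremum is already infinite. For sharpness of the power $p=2$ at $\beta = 32\pi^2$, I would plug in a Moser--Adams concentrating sequence $\{u_{\epsilon}\}$: in the Poincar\'e ball model, radial functions obtained by truncating the $\mathbb{R}^4$ fundamental solution $c\log(1/|x|)$ of $\Delta^2$ at scales $\epsilon$ and $1$, normalized so that $\int_{\mathbb{H}^4} P_2 u_{\epsilon} \cdot u_{\epsilon} \, dv_g = 1$. Direct computation gives $u_{\epsilon}(0) \sim (\log(1/\epsilon))^{1/2}$ and $\|u_{\epsilon}\|^2_{L^2(\mathbb{H}^4)} \sim (\log(1/\epsilon))^{-1}$, while the proposed left-hand side with denominator $(1+|u|)^p$ is of order $(\log(1/\epsilon))^{-p/2}$; the ratio $\mathrm{LHS}/\|u_{\epsilon}\|^2_{L^2}$ therefore diverges like $(\log(1/\epsilon))^{1-p/2}$ as $\epsilon \to 0$ whenever $p < 2$.

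The principal technical obstacle is the rearrangement step for the fourth-order operator $P_2$, which would fail for a generic fourth-order operator but works on $\mathbb{H}^4$ thanks to the conformal factorization. A secondary subtlety is the tail estimate in the small-$L^2$ regime, which requires the quantitative radial $L^{\infty}$-decay depending on $\|u\|_{L^2}$, and the balance between the $\|u\|_{L^2} \geq 1$ and $\|u\|_{L^2} < 1$ cases must be made so that the final constant $C$ is genuinely uniform.
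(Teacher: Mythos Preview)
Your optimality argument matches the paper's, and your bulk estimate via the conformal identity $\int_{\mathbb{H}^4}P_2u\cdot u\,dv_g=\int_{\mathbb{B}^4}|\Delta u|^2\,dx$ together with Theorem~\ref{MasoudiSani1} is correct. The genuine gap is the symmetrization step, on which your tail estimate in the regime $\|u\|_{L^2}<1$ entirely depends. Neither mechanism you invoke establishes $\int_{\mathbb{H}^4}P_2u^{\sharp}\cdot u^{\sharp}\,dv_g\le\int_{\mathbb{H}^4}P_2u\cdot u\,dv_g$. The factorization $P_2=(-\Delta_g)(-\Delta_g-2)$ does not permit an ``iterated'' Talenti comparison: Talenti compares $u^{\sharp}$ with the solution $v$ of $-\Delta_g v=(-\Delta_g u)^{\sharp}$ and yields $u^{\sharp}\le v$, but nothing that survives composition with the second factor; indeed, via the conformal identity your claim is equivalent to a Euclidean P\'olya--Szeg\H{o} inequality $\|\Delta(u^{*})\|_{L^2(\mathbb{B}^4)}\le\|\Delta u\|_{L^2(\mathbb{B}^4)}$, which is well known to fail for second derivatives. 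The O'Neil/Green's function route likewise produces only a pointwise majorant of $u^{\sharp}$ (this is exactly the content of the paper's Lemma~\ref{main lemma}, used there for the Lions-type theorems), but that majorant is not $u^{\sharp}$ and its $L^2$ norm is not controlled by $\|u\|_{L^2(\mathbb{H}^4)}$, so it cannot replace $u$ on the right-hand side of \eqref{Adams inequality with exact growth}. Without radiality your pointwise bound $u(r)\le\|u\|_{L^2}/\mathrm{vol}_g(B_r)^{1/2}$ is unavailable and the tail is left uncontrolled.

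The paper avoids symmetrization altogether. It first proves a local exact-growth estimate on a fixed pair $\overline{V}\subset U\subset\overline{U}\subset\mathbb{B}^4$ (Lemma~\ref{Adams inequality with exact growth lemma 2}: extend from $H^2(V)$ to $H^2_0(\mathbb{B}^4)$ and apply Theorem~\ref{MasoudiSani1}), then covers $\mathbb{H}^4$ by hyperbolic translates $\{\tau_{b_i}(V)\}$ of bounded multiplicity (Lemma~\ref{basicl5}). For each admissible $u$ the set of ``bad'' indices with $\|u\circ\tau_{b_i}\|_{H^2_g(U)}$ large has uniformly bounded cardinality, by the norm equivalence of Lemma~\ref{equivalence of norms}; on the finitely many bad patches one invokes Theorem~\ref{MasoudiSani1} globally through the conformal identity, on the good patches one rescales into the local lemma, and one sums using the bounded overlap. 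This bounded-multiplicity summation is what replaces your radial tail decay and closes the argument.
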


\begin{thm} \label{exact growth theorem 2}
 For any $\alpha \in (0 , 32\pi^2)$ there exists a constant $C(\alpha) > 0$ such that for all  
 $u \in C^{\infty}_c(\mathbb{H}^4)$ with $\int_{\mathbb{H}^4} P_2(u)u \ dv_g \leq 1,$ there holds,
 \begin{align}
  \int_{\mathbb{H}^4} \left (e^{\alpha u^2} - 1\right) \ dv_g \leq C(\alpha) ||u||^2_{L^2({\mathbb{H}^4})},
 \end{align}
and the inequality fails for any $\alpha \geq 32\pi^2.$
\end{thm}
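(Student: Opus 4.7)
The plan is to derive Theorem \ref{exact growth theorem 2} as a direct consequence of Theorem \ref{exact growth theorem 1} via a splitting argument, and to establish sharpness by transplanting the Euclidean extremizing sequence of \cite{MSani} into $\mathbb{H}^4$ through the ball model.

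For the positive direction, I would split the integration domain into $\{|u|\le 1\}$ and $\{|u|>1\}$. On the sub-unit region, the elementary bound $e^t-1\le t\,e^t$ yields $e^{\alpha u^2}-1\le \alpha e^{\alpha}u^2$, so this contribution is at most $\alpha e^{\alpha}||u||^2_{L^2(\mathbb{H}^4)}$. On $\{|u|>1\}$, since $\alpha<32\pi^2$, the map $t\mapsto(1+t)^2 e^{-(32\pi^2-\alpha)t^2}$ is bounded on $[0,\infty)$ by some constant $K(\alpha)$, which yields the pointwise comparison
\begin{equation*}
 e^{\alpha u^2}\;\le\; K(\alpha)\,\frac{e^{32\pi^2 u^2}}{(1+|u|)^2}\;\le\; K(\alpha)\,\frac{e^{32\pi^2 u^2}-1}{(1+|u|)^2}\;+\;\frac{K(\alpha)}{(1+|u|)^2}.
\end{equation*}
Applying Theorem \ref{exact growth theorem 1} to the first term, and using the elementary estimate $(1+|u|)^{-2}\le u^2$ (valid on $\{|u|>1\}$) for the second, controls both by a multiple of $||u||^2_{L^2(\mathbb{H}^4)}$; combining the two regions gives Theorem \ref{exact growth theorem 2}.

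For the sharpness (failure for every $\alpha\ge 32\pi^2$), I would build a sequence concentrated at a fixed base point $x_0\in\mathbb{H}^4$. Working in the ball model with $x_0=0$, I would adapt the extremal sequence from \cite{MSani} that defeats Adachi-Tanaka at threshold in $\mathbb{R}^4$, rescale it so that its support shrinks into a small geodesic ball around $0$, and view the result as a function on $\mathbb{H}^4$. Because the hyperbolic metric near $0$ is a smooth conformal deformation of the Euclidean one, both $||u_n||^2_{L^2(\mathbb{H}^4)}$ and $\int_{\mathbb{H}^4}P_2(u_n)\,u_n\,dv_g$ agree with their Euclidean analogues $||u_n||^2_{L^2(\mathbb{R}^4)}$ and $||\Delta u_n||^2_{L^2(\mathbb{R}^4)}$ up to bounded conformal factors and $o(1)$ corrections as the concentration scale tends to zero. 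After renormalizing so that $\int P_2(u_n)u_n\,dv_g\le 1$, the ratio $\int_{\mathbb{H}^4}(e^{32\pi^2 u_n^2}-1)\,dv_g\,/\,||u_n||^2_{L^2(\mathbb{H}^4)}$ diverges, ruling out the inequality for every $\alpha\ge 32\pi^2$.

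The main obstacle is making the asymptotic identification $\int_{\mathbb{H}^4}P_2(u_n)u_n\,dv_g=||\Delta u_n||^2_{L^2(\mathbb{R}^4)}+o(1)$ precise, since the GJMS operator $P_2$ on $\mathbb{H}^4$ carries non-trivial lower-order (curvature) corrections to the bi-Laplacian. Fortunately, the extremal sequence concentrates at scale $\epsilon_n\to 0$, so the fourth-order leading term dominates while the lower-order contributions are $o(1)$---exactly the regime where the Euclidean-to-hyperbolic comparison transfers the known failure from $\mathbb{R}^4$ without loss.
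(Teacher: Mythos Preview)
Your positive direction is correct and is genuinely different from the paper's proof. The paper does not deduce Theorem~\ref{exact growth theorem 2} from Theorem~\ref{exact growth theorem 1}; instead it reruns the covering machinery: it fixes open sets $V\Subset U\Subset\mathbb{B}^4$, invokes a local Adachi--Tanaka estimate (Lemma~\ref{Adams inequality with exact growth lemma 3}), covers $\mathbb{B}^4$ by hyperbolic translates $\tau_{b_i}(V)$, and splits the index set according to whether $\|u\circ\tau_{b_i}\|_{H^2_g(U)}^2$ exceeds $q_2/\alpha$. The ``bad'' patches are finitely many (with cardinality bounded independently of $u$) and are handled by the Euclidean Theorem~\ref{MasoudiSani2} via the conformal identity $\int_{\mathbb{H}^4}P_2(u)u\,dv_g=\int_{\mathbb{B}^4}|\Delta u|^2\,dx$. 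Your route---splitting $\{|u|\le 1\}$ versus $\{|u|>1\}$ and comparing pointwise with the exact-growth integrand---is shorter and makes the implication Theorem~\ref{exact growth theorem 1} $\Rightarrow$ Theorem~\ref{exact growth theorem 2} transparent, at the cost of not being self-contained (it relies on Theorem~\ref{exact growth theorem 1}). The paper's argument, by contrast, is parallel to and independent of the proof of Theorem~\ref{exact growth theorem 1}.

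For the sharpness, your plan is sound but you are working harder than necessary. You flag as the ``main obstacle'' the asymptotic identification $\int_{\mathbb{H}^4}P_2(u_n)u_n\,dv_g=\|\Delta u_n\|_{L^2(\mathbb{R}^4)}^2+o(1)$, arguing that lower-order curvature terms vanish in the concentration limit. In fact no asymptotics are needed: by the conformal covariance of the critical GJMS operator (equation~\eqref{confrel} with $N=2k=4$), one has the \emph{exact} identity $\int_{\mathbb{H}^4}P_2(u)u\,dv_g=\int_{\mathbb{B}^4}|\Delta u|^2\,dx$ for every $u\in C^\infty_c(\mathbb{B}^4)$. The paper exploits this directly: it takes the Masmoudi--Sani sequence, dilates it to have support in $B(0,2/3)$, and reads off $\int P_2(\tilde u_m)\tilde u_m\,dv_g=1+O(1/\log m)$ without any error analysis on curvature corrections. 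Using this identity would remove the only obstacle you identified.
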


The next three results in this article are concerning P.L.Lions lemma in the whole hyperbolic space.   
\begin{thm} \label{main pll1}
 Let $\{u_m : ||\nabla u_m||_{L^N(\hn)} = 1\}$ be a sequence in $W^{1,N}(\hn)$ such that $u_m$ converges weakly to a nonzero
 function $u$ in $W^{1,N}(\hn).$ Then  
 
 \begin{align}
  \sup_m \int_{\hn} \Phi_{1,N} (\alpha_N p |u_m|^{\frac{N}{N - 1}}) \ dv_g < + \infty,
 \end{align}
for all  $p < \left(1 - ||\nabla u||_{L^N(\hn)}^N\right)^{-\frac{1}{N - 1}}.$
\end{thm}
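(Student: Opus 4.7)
The plan is to adapt P.L. Lions' original blueprint from \cite{P.L.L} to the hyperbolic setting, using as black boxes the sharp Trudinger-Moser inequality on $\hn$ from \cite{LuT,MS} together with a Brezis-Lieb decomposition for $||\nabla_g \cdot||_{L^N(\hn)}^N$.

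First, I would pass to a subsequence along which $u_m \to u$ pointwise a.e.\ on $\hn$. Weak convergence in $W^{1,N}(\hn)$ together with Rellich-Kondrachov compactness on each geodesic ball $B_R \subset \hn$ (a smooth bounded hyperbolic domain) gives strong $L^N_{\mathrm{loc}}$ convergence, and a diagonal extraction yields a.e. convergence on the whole space. Combined with the weak convergence of the gradients, a Brezis-Lieb argument then produces the splitting
\begin{align*}
||\nabla_g u_m||_{L^N(\hn)}^N = ||\nabla_g(u_m - u)||_{L^N(\hn)}^N + ||\nabla_g u||_{L^N(\hn)}^N + o(1).
\end{align*}
Setting $\tau := ||\nabla_g u||_{L^N(\hn)}^N \in (0,1)$ (which is positive by hypothesis), this yields $||\nabla_g(u_m - u)||_{L^N(\hn)}^{N/(N-1)} \to (1-\tau)^{1/(N-1)}$.

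Next, for each $\varepsilon > 0$ the elementary convexity estimate $(a+b)^{N/(N-1)} \leq (1+\varepsilon)a^{N/(N-1)} + C(\varepsilon)b^{N/(N-1)}$ applied to $|u_m| \leq |u_m-u| + |u|$ gives a pointwise bound $|u_m|^{N/(N-1)} \leq (1+\varepsilon)|u_m-u|^{N/(N-1)} + C(\varepsilon)|u|^{N/(N-1)}$. Combining $e^{s+t}=e^s e^t$ with Holder's inequality for conjugate exponents $q,q' > 1$, and absorbing the finite polynomial tail of $\Phi_{1,N}$ into $L^r$-norms with $r \geq N$ (uniformly controlled by the embedding $W^{1,N}(\hn) \hookrightarrow L^r(\hn)$), the problem reduces to bounding
\begin{align*}
\left(\int_{\hn} e^{\alpha_N p (1+\varepsilon) q\,|u_m - u|^{\frac{N}{N-1}}}\, dv_g\right)^{1/q}\!\left(\int_{\hn} e^{\alpha_N p C(\varepsilon) q'\,|u|^{\frac{N}{N-1}}}\, dv_g\right)^{1/q'}
\end{align*}
uniformly in $m$. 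The hypothesis $p < (1-\tau)^{-1/(N-1)}$ is precisely what lets us choose $\varepsilon > 0$ small and $q > 1$ sufficiently close to $1$ so that, for $m$ large, $\alpha_N p (1+\varepsilon) q\, ||\nabla_g(u_m - u)||_{L^N(\hn)}^{N/(N-1)} < \alpha_N$. The first factor is then uniformly bounded by applying the sharp Trudinger-Moser inequality on $\hn$ to the normalised sequence $v_m := (u_m - u)/||\nabla_g(u_m - u)||_{L^N(\hn)}$; the second factor is a fixed finite constant because $u \in W^{1,N}(\hn)$ (splitting $\{|u|>T\}$, on which $||\nabla_g u||_{L^N}$ is arbitrarily small so the subcritical Moser bound applies, and its complement, handled by Taylor expansion and Sobolev embedding).

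The main obstacle I anticipate is the Brezis-Lieb step in the noncompact setting $\hn$: one must carefully establish a.e.\ convergence of both $u_m$ and, in an appropriate sense, the hyperbolic gradients, in order to validate the splitting of $||\nabla_g u_m||_{L^N}^N$. Once this is in place, the exponential estimates transcribe directly from the Euclidean Lions argument, since the critical constant $\alpha_N$ on $\hn$ coincides with the Euclidean one.
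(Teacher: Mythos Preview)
Your global function decomposition $u_m = (u_m - u) + u$ is a different route from the paper's, and the step you flag as the ``main obstacle'' is in fact a genuine gap. The Brezis--Lieb splitting
\[
\|\nabla_g u_m\|_{L^N(\hn)}^N = \|\nabla_g(u_m - u)\|_{L^N(\hn)}^N + \|\nabla_g u\|_{L^N(\hn)}^N + o(1)
\]
requires a.e.\ convergence of $\nabla_g u_m$, which does \emph{not} follow from weak convergence in $W^{1,N}(\hn)$; Rellich on balls gives a.e.\ convergence of $u_m$ only. For $N = 2$ the splitting is automatic by Hilbert orthogonality, but for $N > 2$ even the one-sided bound $\limsup_m \|\nabla_g(u_m - u)\|_{L^N}^N \leq 1 - \tau$ can fail for a general sequence with $\nabla_g u_m \rightharpoonup \nabla_g u$ in $L^N$ and unit norm (an oscillatory counter-example in $L^4$ is easy to write down). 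Since your scheme needs exactly this bound to feed $(u_m - u)$ into the sharp hyperbolic Trudinger--Moser inequality at the critical exponent $\alpha_N$, the argument breaks here and cannot be repaired without additional input. (Contrast Theorem~\ref{main pll 2}, where the constraint $\|\cdot\|_{k,g}$ comes from an inner product and the decomposition you propose does work; that is precisely the route the paper takes there.)

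The paper sidesteps the problem for Theorem~\ref{main pll1} by a \emph{spatial} decomposition instead. Step~1 treats $\bn \setminus B(0,R_0)$: choose a compact $K$ with $\|\nabla_g u\|_{L^N(K)}$ nearly equal to $\|\nabla_g u\|_{L^N(\hn)}$; weak convergence alone gives $\|\nabla_g u_m\|_{L^N(K)} \geq \|\nabla_g u\|_{L^N(K)} - \epsilon$ for large $m$, hence $\|\nabla_g u_m\|_{L^N(\bn \setminus K)}^N \leq 1 - \tau + O(\epsilon)$; a cut-off (Lemma~\ref{test function}) then produces $\psi_0 u_m$ with $\|\nabla_g(\psi_0 u_m)\|_{L^N(\hn)} < p^{-(N-1)/N}$, and the sharp Trudinger--Moser inequality on $\hn$ controls the exterior integral uniformly. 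Step~2 treats $B(0,R)$: by the conformal invariance $|\nabla_g u|_g^N\, dv_g = |\nabla u|^N\, dx$ one has $\|\nabla u_m\|_{L^N(\bn,\, dx)} = 1$ with $u_m \rightharpoonup u$ in the Euclidean $W^{1,N}(\bn)$, and the Euclidean Lions lemma of \cite{P.L.L}, \cite{CCH} (for functions with unrestricted boundary values) is invoked as a black box. No Brezis--Lieb identity for gradients is ever needed.
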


\begin{thm} \label{main pll 2}
Let $\{u_m : ||u_m||_{k,g} = 1\}$ be a sequence in $H^k(\hn)$ such that $u_m$ converges weakly to a nonzero function 
 $u$ in $H^k(\hn).$ Then for all $p < \left(1 - ||u||^2_{k,g}\right)^{-1},$ there holds
 \begin{align}
  \sup_m \int_{\hn} \left(e^{\beta_0p u^2_m} - 1\right) \ dv_g < +\infty,
 \end{align}  
 where $N = 2k$ and $\beta_0 = \beta_0 (k,N)$ as defined in \eqref{beta0}.
 \end{thm}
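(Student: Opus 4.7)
The plan is to decompose $u_m = u + w_m$ with $w_m \rightharpoonup 0$ weakly in $H^k(\hn)$ and reduce the problem to the sharp critical Adams inequality (Theorem \ref{HYADA}) applied to the normalized sequence $v_m := w_m/\|w_m\|_{k,g}$. Since the bilinear form $\langle v, w\rangle_{k,g} := \int_{\hn} P_k v \cdot w\, dv_g$ defines a Hilbert inner product on $H^k(\hn)$, and the weak convergence of $u_m$ gives $\langle u_m, u\rangle_{k,g} \to \|u\|_{k,g}^2 = s^2$, it follows that $\|w_m\|_{k,g}^2 \to 1 - s^2$. Rellich's compactness theorem guarantees $w_m \to 0$ a.e. on $\hn$ along a subsequence.

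Because $p(1-s^2) < 1$ by hypothesis, fix $\delta > 0$ small enough that $\lambda := p(1+\delta)(1-s^2+\delta) < 1$. The pointwise Young inequality $u_m^2 \leq (1+\delta) w_m^2 + (1+1/\delta) u^2$ combined with the algebraic identity $e^{A+B} - 1 = (e^A - 1) + (e^B - 1) + (e^A - 1)(e^B - 1)$, applied to $A := \beta_0 p(1+\delta) w_m^2$ and $B := \beta_0 p(1+1/\delta) u^2$, decomposes the integrand into three pieces. For the piece involving $A$, the choice of $\delta$ gives $\|w_m\|_{k,g}^2 \leq 1 - s^2 + \delta$ for large $m$, whence $A \leq \beta_0 \lambda v_m^2 \leq \beta_0 v_m^2$, and Theorem \ref{HYADA} yields
\[
\int_{\hn} (e^A - 1)\, dv_g \leq \int_{\hn} (e^{\beta_0 v_m^2} - 1)\, dv_g \leq C
\]
uniformly in $m$.

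The main obstacle will be bounding the two terms that involve $B$, because when $\delta$ is small the coefficient $\beta_0 p(1+1/\delta)$ may exceed the subcritical Adams threshold for $u$, so $\int (e^B - 1)\, dv_g$ is not directly guaranteed to be finite. To resolve this I plan to partition $\hn$ into $E_L := \{|u| \leq L\}$ and its complement for a large parameter $L$. On $E_L$, the pointwise bound $B \leq B_L := \beta_0 p(1+1/\delta)L^2$ together with $e^B - 1 \leq B e^B$ and $u \in L^2(\hn)$ controls both $\int_{E_L}(e^B - 1)\, dv_g$ and the cross term $\int_{E_L}(e^A - 1)(e^B - 1)\, dv_g$ by finite constants depending on $L$. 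On $E_L^c$ one exploits the Chebyshev bound $|E_L^c|_g \leq \|u\|_{L^2}^2/L^2$ and the decay $\|u\mathbf{1}_{E_L^c}\|_{L^2} \to 0$, coupled with a H\"older argument against a subcritical Adachi--Tanaka type Adams inequality on $\hn$ (the $H^k(\hn)$ analogue of Theorem \ref{exact growth theorem 2}, applied directly to $u_m$). The delicate point will be to choose $\delta$, $L$, and the H\"older exponent simultaneously so that the combined estimate still covers the full sharp range $p < (1-s^2)^{-1}$ rather than a weaker threshold that the unsplit Young--H\"older combination alone would yield.
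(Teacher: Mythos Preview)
Your global decomposition $u_m = u + w_m$ is a legitimate and arguably cleaner alternative to the paper's proof, which takes a different route: it covers $\hn$ by hyperbolic translates $\{\tau_{b_i}(V)\}$ via Lemma~\ref{basicl5}, separates ``good'' pieces where $\|u_m\circ\tau_{b_i}\|_{H^k_g(U)}$ is small (controlled by the local estimate Lemma~\ref{basicl4}) from a uniformly bounded number of ``bad'' pieces, and on each bad piece pulls back to $\bn$, writes $u_m\circ\tau_{b_{i_m}} = v_m + w_m$ with $v_m = (u_m - u)\circ\tau_{b_{i_m}}$, and applies a three-exponent H\"older together with the conformal identity $\|v_m\|_{k,g}^2 = \int_{\bn}|\nabla^k v_m|^2\,dx$ to reduce to Euclidean Adams on the unit ball. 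Your approach bypasses both the covering and the conformal reduction by invoking Theorem~\ref{HYADA} directly on $\hn$.

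The gap is in your handling of the $B$-term. The ``main obstacle'' you flag is a false one: for \emph{every} fixed $u\in H^k(\hn)$ and \emph{every} $q>0$ one has $\int_{\hn}(e^{qu^2}-1)\,dv_g<\infty$. Simply approximate $u$ by $\phi\in\Chn$ with $\|u-\phi\|_{k,g}$ so small that $q(1+\epsilon)\|u-\phi\|_{k,g}^2<\beta_0$, apply your own Young inequality $u^2\le(1+\epsilon)(u-\phi)^2+C_\epsilon\phi^2$, use Theorem~\ref{HYADA} on the first piece, and note that $\phi$ is bounded with compact support. The paper invokes exactly this fact (without proof) at the step marked~\eqref{estimateI}. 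Once you have it, $\int(e^B-1)\,dv_g$ is finite outright, and for the cross term a single H\"older with $r>1$ chosen so that $rp(1+\delta)(1-s^2+\delta)<1$ (possible since $\lambda<1$) gives
\[
\int_{\hn}(e^A-1)(e^B-1)\,dv_g \le \Bigl[\int_{\hn}(e^{rA}-1)\,dv_g\Bigr]^{1/r}\Bigl[\int_{\hn}(e^{r'B}-1)\,dv_g\Bigr]^{1/r'},
\]
the first factor uniformly bounded by Theorem~\ref{HYADA} and the second finite by the fact just stated.

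Your proposed $E_L^c$ detour, by contrast, does not close as written: an Adachi--Tanaka type inequality applied to $u_m$ with $\|u_m\|_{k,g}=1$ controls only $\int(e^{\alpha u_m^2}-1)\,dv_g$ for $\alpha<\beta_0$, whereas you need $\alpha=\beta_0 p>\beta_0$; and a H\"older step against the small measure $|E_L^c|_g$ would require an $L^r$-bound on $e^{\beta_0 p u_m^2}$ for some $r>1$, which is still further supercritical. Drop the level-set splitting and use the fixed-$u$ exponential integrability instead.
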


\begin{thm} \label{main pll 3}
 Let $\{ u_m : ||\Delta_g u_m||_{L^{\frac{N}{2}}(\hn)} = 1\}$ be a sequence in $W^{2, \frac{N}{2}}(\hn)$ such that 
 $u_m$ converges weakly to a nonzero function $u$ in $W^{2, \frac{N}{2}}(\hn).$ Then 
 \begin{align}
  \sup_m \int_{\hn} \Phi_{2, \frac{N}{2}}\left(\beta_0 p |u_m|^{\frac{N}{N - 2}}\right) \ dv_g < + \infty,
 \end{align}
 for all $p < \left(1 - ||\Delta_g u||^{\frac{N}{2}}_{\frac{N}{2}}\right)^{-\frac{2}{N -2}},$ 
 where $\beta_0 = \beta_0 (2,N)$ as defined in \eqref{beta0}, 
 \begin{align*}
 \Phi_{2 ,\frac{N}{2}}(t) := e^t - \displaystyle{\sum_{j = 0}^{j_{\frac{N}{2}} - 2} \frac{t^j}{j !}} \ \ 
 \ \mbox{and} \ \ j_{\frac{N}{2}} = \min \left\{j : j \geq \frac{N}{2}\right\}.
 \end{align*}
\end{thm}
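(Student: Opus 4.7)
The plan is to follow the concentration-compactness template of P.L.\ Lions \cite{P.L.L} and its extensions to unbounded domains (\cite{CCH,DoOetal}), adapted to the Adams functional on $\hn$ in parallel with the approach envisaged for Theorems \ref{main pll1} and \ref{main pll 2}. The idea is to decompose $u_m = \psi + \rho_m$, where $\psi \in C^\infty_c(\hn)$ is a smooth compactly supported approximation of $u$ and $\rho_m := u_m - \psi$ carries the ``bubble''. H\"older's inequality will then separate a bubble integral in $\rho_m$, controlled by the Adams inequality on $\hn$ (\cite{Fontana15,SD}), from a smooth--profile integral in $\psi$, which is automatically finite.

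Setting $a := \|\Delta_g u\|_{L^{N/2}(\hn)}^{N/2}$, we have $a \in (0,1]$ ($a > 0$ since $u \not\equiv 0$, and $a \leq 1$ by weak lower semi-continuity). After passing to a subsequence, local Rellich compactness yields $u_m \to u$ almost everywhere; the first technical input is then a Brezis--Lieb identity
$$\|\Delta_g(u_m - u)\|_{L^{N/2}(\hn)}^{N/2} \longrightarrow 1 - a \qquad (m \to \infty).$$
Given any $\delta > 0$, density furnishes $\psi \in C^\infty_c(\hn)$ with $\|\Delta_g(u - \psi)\|_{N/2}^{N/2} < \delta$, and the algebraic inequality $(x + y)^{N/2} \leq (1 + \epsilon_1) x^{N/2} + C_{\epsilon_1} y^{N/2}$ combined with the triangle inequality yields $\|\Delta_g \rho_m\|_{N/2}^{N/2} \leq (1 + \epsilon_1)(1 - a) + C_{\epsilon_1}\delta + o(1)$.

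Next, combining the pointwise inequality $|u_m|^{N/(N-2)} \leq (1 + \epsilon_2)|\rho_m|^{N/(N-2)} + C_{\epsilon_2}|\psi|^{N/(N-2)}$ with H\"older's inequality applied to the identity $e^{A + B} - 1 = (e^A - 1) + (e^B - 1) + (e^A - 1)(e^B - 1)$ (using conjugate exponents $(Q, Q')$ with $Q$ close to $1$), we decouple the integral as
$$\int_{\hn} \Phi_{2, \frac{N}{2}}\!\bigl(\beta_0 p |u_m|^{\frac{N}{N-2}}\bigr) dv_g \;\leq\; I_\rho(m)^{1/Q} \, I_\psi^{1/Q'} + (\text{cross and lower-order terms}).$$
The bubble integral $I_\rho(m)$ is controlled by Adams on $\tilde\rho_m := \rho_m / \|\Delta_g \rho_m\|_{N/2}$ once the rescaled coefficient satisfies $Q^{(N-2)/2}(1 + \epsilon_2)^{(N-2)/2} p^{(N-2)/2} \|\Delta_g \rho_m\|_{N/2}^{N/2} \leq 1$, which in the limit reduces to $p^{(N-2)/2}(1 - a) < 1$, precisely the hypothesis, and is enforced by shrinking $\delta, \epsilon_1, \epsilon_2, Q - 1$. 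The smooth integral $I_\psi$ is finite because $\psi$ is bounded and compactly supported, so $\int_{\hn}(e^{C |\psi|^{N/(N-2)}} - 1)\, dv_g < \infty$ for every $C > 0$; the cross terms are tamed by Cauchy--Schwarz, and the lower-order polynomial corrections are bounded by Sobolev embeddings on $\hn$.

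The main obstacle is the Brezis--Lieb identity of the first step: weak convergence $u_m \rightharpoonup u$ in $W^{2, N/2}(\hn)$ yields $u_m \to u$ a.e.\ but does not immediately give a.e.\ convergence of $\Delta_g u_m$, which is what the standard Brezis--Lieb lemma requires. I expect to resolve this either by (i) a Lions-type concentration-compactness decomposition of the measures $|\Delta_g u_m|^{N/2}\, dv_g$ on $\hn$, with the strict inequality $p < (1 - a)^{-2/(N-2)}$ precluding an atomic obstruction, or (ii) a further diagonal extraction combined with hyperbolic elliptic regularity for $(-\Delta_g)^{-1}$. Once this identity is in hand, the simultaneous calibration of the parameters $(\delta, \epsilon_1, \epsilon_2, Q - 1)$ to satisfy all Adams-threshold constraints under the open hypothesis on $p$ is essentially routine.
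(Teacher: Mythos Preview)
Your proposal has a genuine gap at exactly the point you flag: the Brezis--Lieb identity
\[
\|\Delta_g(u_m - u)\|_{L^{N/2}(\hn)}^{N/2} \longrightarrow 1 - a
\]
cannot be obtained from weak convergence in $W^{2,N/2}(\hn)$ alone. You correctly observe that Rellich gives $u_m \to u$ a.e., but no amount of elliptic regularity for $(-\Delta_g)^{-1}$ will upgrade this to $\Delta_g u_m \to \Delta_g u$ a.e., since $\Delta_g u_m$ is merely bounded in $L^{N/2}$ with no extra differentiability. Your fix (ii) therefore does not work. Fix (i) --- a full Lions decomposition of the measures $|\Delta_g u_m|^{N/2}\,dv_g$ on the noncompact space $\hn$ --- is itself a substantial undertaking and is not made precise in your outline; moreover, without a.e.\ convergence you cannot even rule out oscillation (as opposed to concentration), so the measure-theoretic decomposition alone will not suffice.

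The paper avoids this obstacle by a different decomposition. Rather than splitting $u_m = \psi + \rho_m$ algebraically, it splits the \emph{domain} into an outer region $\bn \setminus B(0,R_0)$ and a compact inner region. On the outer region it multiplies $u_m$ by a cutoff $\psi_0$ vanishing on a compact set $K$ that carries most of the mass of $\Delta_g u$; the crucial point is that weak convergence plus duality gives $\liminf_m \|\Delta_g u_m\|_{L^{N/2}(K)} \geq \|\Delta_g u\|_{L^{N/2}(K)}$, so the \emph{exact} identity $\|\Delta_g u_m\|_{L^{N/2}(K)}^{N/2} + \|\Delta_g u_m\|_{L^{N/2}(K^c)}^{N/2} = 1$ forces $\|\Delta_g u_m\|_{L^{N/2}(K^c)}$ small without any Brezis--Lieb. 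One then applies Adams on $\hn$ to $\psi_0 u_m$. For the inner region the paper uses its main Lemma~\ref{main lemma}: a symmetrization comparison transfers the problem to functions $v_m$ on the Euclidean ball with $-\Delta v_m = |f_m|^*$, and a rearrangement lemma of do~\'O--Macedo \cite{DoMac} supplies pointwise convergence of $|f_m|^\#$, after which Brezis--Lieb \emph{does} apply to $\Delta v_m$. This two-step geometric strategy is what you are missing; your algebraic splitting cannot, as it stands, produce the required norm control on $\Delta_g \rho_m$.
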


The paper is organized as follows: In section 2 we introduced a few notations and definitions used in this article and 
recall some of the basic results. We prove one of our main lemma in section 3. Section 4 and 5 are devoted for the proofs 
of the theorems. In section 4 we will prove Theorems \ref{exact growth theorem 1}, \ref{exact growth theorem 2} 
and \ref{main pll 2}. Finally in section 5 we prove Theorem \ref{main pll 3} first and then Theorem \ref{main pll1}.

\section{Notations and Preliminaries}
 \textbf{Notations:}
 Let $\Omega$ be a bounded domain in $\rn.$ We denote by: \\
 $W^{k,p}(\Omega)$ : The usual Sobolev space with respect to the norm
 \begin{align*}
  ||u||_{W^{k,p}(\Omega)} := \left(\sum_{|\alpha| \leq k} ||D^{\alpha} u||_{L^p(\Omega)}\right)^{\frac{1}{p}},
 \end{align*}
where $\alpha = (\alpha_1, \alpha_2, ..., \alpha_N), \alpha_i \in \mathbb{N} \cup \{0\}$ is a multi-index,
$|\alpha| = \alpha_1 + \alpha_2 + ... + \alpha_N, D^{\alpha}$ is the weak derivative of order $\alpha,$ 
\begin{align*}
 D^{\alpha} := \frac{\partial^{|\alpha|}}{\partial x_1^{\alpha_1} \partial x_2^{\alpha_2}...\partial x_N^{\alpha_N}},
\end{align*}
and $||h||_{L^p(\Omega)}$ is the $L^p$-norm of $h$ in $\Omega.$ \\
$W^{k,p}_0(\Omega)$: the closure of $C^{\infty}_c(\Omega)$ in $W^{k,p}(\Omega).$ 

When $p = 2$ we will denote $W^{k,2}(\Omega)$ (respectively $W^{k,2}_0(\Omega)$) by $H^k(\Omega)$ (respectively $H^k_0(\Omega)$).

\vspace{0.5 cm}

\textbf{Hyperbolic space :}
The hyperbolic $N$-space is a complete, simply connected, Riemannian $N$-manifold having constant sectional
 curvature equals to $-1.$ It is well known that two manifolds having above properties are isometric (see \cite{Wolf}).
We will denote the hyperbolic $N$-space by $\hn.$ 

If we consider $\bn := \{x=(x_1,x_2,...,x_N) \in \rn : x^2_1 + x^2_2 + ... + x^2_N < 1 \}$ together with the Poincar\'{e}
metric $g$ given by
\begin{align}
 g:= \sum_{i = 1}^N \left(\frac{2}{1 - |x|^2}\right)^2 dx^2_i,
\end{align}
then it has constant sectional curvature equals to $-1.$ 
$(\bn, g)$	 is called the conformal ball model for the hyperbolic $N$-space.
There are several other models for the hyperbolic $N$-space,
for the rest of this article we will only work with the conformal ball model.

The volume element for the hyperbolic $N$-space is given by $dv_g = \left(\frac{2}{1 - |x|^2}\right)^N dx,$ where
$dx$ is the Lebesgue measure in $\rn.$
Let $\nabla, \Delta$ be the Euclidean gradient and Laplacian and $\langle,\rangle$ denotes the standard inner product 
in $\rn.$ Then in terms of local coordinate the hyperbolic gradient $\nabla_g$ and the Laplace-Beltrami operator $\Delta_g$ 
are given by :
\begin{align} \label{gradlaplacian}
 \nabla_g = \left(\frac{1 - |x|^2}{2}\right)^2\nabla \ , \ 
 \Delta_g = \left(\frac{1 - |x|^2}{2}\right)^2 \Delta + (N - 2)\left(\frac{1 - |x|^2}{2}\right) \langle x,\nabla \rangle,
\end{align}

\begin{defi}[Hyperbolic translation] 
Let $b \in \bn,$ then the hyperbolic translation $\tau_b : \bn \rightarrow \bn$ is given by,
\begin{align} \label{hyperbolictranslation}
  \tau_{b}(x) := \frac{(1 - |b|^2)x + (|x|^2 + 2\langle x,b \rangle + 1)b}{|b|^2|x|^2 + 2\langle x,b \rangle + 1}.
\end{align}
$\tau_b$ is an isometry from $\bn$ to $\bn.$ See \cite{Ratcliffe} for various other properties and isometries.
\end{defi}
We have the following useful lemma :
\begin{lem} \label{lemma1}
  Let $\tau_b$ be the hyperbolic translation of $\bn$ by $b.$ Then,
  \item[(i).] For all $u \in \Chn,$ there holds,
  \begin{align*}
   \Delta_g (u \circ \tau_b) = (\Delta_g u) \circ \tau_b, \ \ \langle \nabla_g (u \circ \tau_b), \nabla_g (u \circ \tau_b)\rangle_g = \langle(\nabla_g u) \circ \tau_b, (\nabla_g u) \circ \tau_b\rangle_g.
  \end{align*}
  \item[(ii).]For any $u\in \Chn$ and open subset $U$ of $\bn$
  \begin{align*}
  \int_{U} |u \circ \tau_b|^p \ dv_g = \int_{\tau_b(U)} |u|^p \ dv_g, \ \mbox{for all} \ 1 \leq p < \infty.
 \end{align*}
  
\end{lem}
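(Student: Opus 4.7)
The plan is to reduce both parts to the single fact---already noted right after \eqref{hyperbolictranslation}---that $\tau_b$ is a Riemannian isometry of $(\bn, g)$. A proof is recorded in Ratcliffe; alternatively one can verify $\tau_b^{*} g = g$ directly by substituting \eqref{hyperbolictranslation} into the Poincar\'{e} metric. Once isometry is in hand, both (i) and (ii) follow from general principles valid on any Riemannian manifold: the gradient, the Laplace--Beltrami operator and the volume form are all built intrinsically from $g$, hence they commute with pullback by any isometry.

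For part (i), I would appeal to this naturality. The gradient is characterized by $g(\nabla_g u, X) = du(X)$ for every vector field $X$, so $\tau_b^{*} g = g$ forces
\begin{align*}
\nabla_g(u \circ \tau_b) \;=\; (d\tau_b)^{-1}\bigl( (\nabla_g u) \circ \tau_b \bigr)
\end{align*}
pointwise; taking the $g$-norm squared and using $\tau_b^{*} g = g$ once more yields $|\nabla_g(u\circ\tau_b)|_g^{2} = |\nabla_g u|_g^{2} \circ \tau_b$, which is exactly the gradient identity in Lemma \ref{lemma1}(i) (reading the inner product on the right-hand side as the pointwise norm squared evaluated after translation). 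For the Laplacian I would use $\Delta_g = \operatorname{div}_g \nabla_g$; since both $\nabla_g$ and $\operatorname{div}_g$ are intrinsic to $g$, each commutes with $\tau_b^{*}$, giving $\Delta_g(u\circ\tau_b) = (\Delta_g u)\circ\tau_b$. A hands-on alternative is to insert \eqref{hyperbolictranslation} into the coordinate formulas \eqref{gradlaplacian} and compute, but the conceptual route is cleaner.

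For part (ii), the Riemannian volume form is likewise intrinsic, so $\tau_b^{*}(dv_g) = dv_g$. The standard change-of-variables formula then gives
\begin{align*}
\int_U |u \circ \tau_b|^{p}\, dv_g \;=\; \int_U |u \circ \tau_b|^{p}\, \tau_b^{*}(dv_g) \;=\; \int_{\tau_b(U)} |u|^{p}\, dv_g,
\end{align*}
which is the stated identity. The only mild care concerns compact support: since $\tau_b$ is a smooth diffeomorphism of $\bn$, if $u \in \Chn$ then $u\circ\tau_b \in \Chn$ as well, so every integral is finite and the manipulations are fully justified. There is no genuine obstacle; the entire content of the lemma is the isometry property of $\tau_b$, which I would cite as the one essential input up front.
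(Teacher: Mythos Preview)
Your argument is correct and is exactly the intended one: the paper does not supply a separate proof of Lemma~\ref{lemma1} at all, but simply records right before it that $\tau_b$ is an isometry of $(\bn,g)$ (with a reference to \cite{Ratcliffe}), and the lemma is meant to be read as an immediate consequence of that fact. Your write-up spells out precisely this reduction---naturality of $\nabla_g$, $\Delta_g$ and $dv_g$ under pullback by an isometry, plus change of variables---so there is nothing to add or correct.
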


\textbf{The Sobolev space $W^{k,p}(\hn)$ :} For a positive integer $l,$ let $\Delta^l_g$  denotes the $l$-th iterated 
Laplace-Beltrami operator. Define,
\begin{align} \label{iteratedgradient}
 \nabla^l_g :=
 \begin{cases}
  \Delta^{\frac{l}{2}}_g , \ \ \ \ \ \ \mbox{ if $l$ is even} \\
  \nabla_g \Delta^{\frac{l - 1}{2}}_g, \ \ \mbox{if $l$ is odd}.
 \end{cases}
\end{align}
Also define : 
\begin{align*}
  |\nabla^l_g u|_g :=
  \begin{cases}
   |\nabla^l_g u|, \ \ \ \ \ \ \ \ \ \ \ \mbox{if $l$ is even}, \\
   \langle \nabla^l_g u, \nabla^l_g u \rangle^{\frac{1}{2}}_g ,\ \ \mbox{if $l$ is odd}. 
  \end{cases}
\end{align*}

Then define the Sobolev space $W^{k,p}(\hn)$ as the closure of $\Chn$ functions with respect to the norm
\begin{align}
 ||u||_{W^{k,p}(\hn)} := \sum_{m = 0}^k \left[\int_{\hn} 
 |\nabla^m_g u|^p_g  \ dv_g\right]^{\frac{1}{p}},
\end{align}
We will denote $W^{k,2}(\hn)$ by $H^k(\hn).$ For the hyperbolic space it is known that the following Poincar\'{e}
type inequality holds :
\begin{align}
 \int_{\hn} |u|^p \ dv_g \leq C \int_{\hn} |\nabla^k_g u|^p_g  \ dv_g .
\end{align}
More generally if we define :
\begin{align}
 |||u|||_{W^{k,p}(\hn)} := \left[\int_{\hn} 
 |\nabla^k_g u|^p_g  \ dv_g\right]^{\frac{1}{p}},
\end{align}
then it turns out to be an equivalent norm on $W^{k,p}(\hn)$ (see \cite{Fontana15}, \cite{Tataru}).

\vspace{0.5 cm}

\textbf{GJMS operators on $\hn$ :}
Let $(M,g)$ be a Riemannian manifold of even dimension $N$ and $k$ be a positive integer less than or equals to 
$\frac{N}{2}.$

A GJMS operator $P_{k,g}$ of order $2k$ is a conformally invariant differential operators with leading term $\Delta^k_g$,
in the sense that : it satisfies
\begin{equation}\label{confrel}
P_{k,\tilde g}(v) = e^{-(\frac N2 +k)u}P_{k,g}(e^{(\frac N2 -k)u} v).
\end{equation}
for a conformal metric  $\tilde g = e^{2u}g.$

The existence of such higher order operators are due to Graham, Janne, Mason and Sparling (\cite{GJMS}) after 
a fourth order operator by Paneitz and subsequently a sixth order operator by Branson were discovered. 

For $k \in \{1,2,...,\frac{N}{2}\}$ the explicit expression for the GJMS operators are known for the hyperbolic 
space (see \cite{Liu}, \cite{juhl}), and it is given by :
\begin{align} \label{GJMS}
 P_{k,g} := P_1 (P_1 + 2) (P_1 + 6)...(P_1 + k(k - 1)),
\end{align}
where $P_1 := \left[- \Delta_g - \frac{N(N - 2)}{4} \right],$ is the Yamabe operator.

For simplicity of notations we will write GJMS operators on the hyperbolic space by $P_k.$ 
Therefore for all $u \in \Chn$ we can write from \eqref{GJMS} 
\begin{align} \label{expression for GJMS operator}
 \int_{\hn} (P_k u)u \ dv_g =  \int_{\hn} |\nabla^k_g u|^2_g \ dv_g + \sum^{k - 1}_{m = 0} (-1)^{k - m}  
 a_{km} \int_{\hn} |\nabla^m_g u|^2_g \ dv_g ,
\end{align}
where $a_{km}$ are non-negative constants.

The operators $P_k$ gives rise to a conformally invariant norm 
in the space $H^k(\hn)$ for $1\le k\le \frac N2.$ The next lemma will make the statement precise, whose 
proof can be found in \cite{SD}.
\begin{lem} \label{equivalence of norms}
Let $||u||_{k,g}$ be defined by
\begin{align} \label{norm}
 ||u||_{k,g} := \left[\int_{\hn} (P_k u)u \ dv_g\right]^{\frac 12}, \  \ u \in \Chn,
\end{align}
then $||.||_{k,g}$ defines a norm on $\Chn$. When $N=2k$ there exists a positive constant $\Theta$ such that,
\begin{align}\label{equivalent}
  \frac{1}{\Theta} ||u||_{k,g} \leq ||u||_{H^k(\hn)} \leq \Theta ||u||_{k,g},
 \end{align}
for all $u \in \Chn.$ 
\end{lem}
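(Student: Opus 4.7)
My strategy is to exploit the factorization $P_k = \prod_{j=1}^{k}(P_1 + j(j-1))$ from \eqref{GJMS} together with the spectral gap $-\Delta_g \ge \frac{(N-1)^2}{4}$ for the hyperbolic Laplacian (McKean's inequality). Subtracting $\frac{N(N-2)}{4}$ yields $P_1 \ge \frac{1}{4}$ in the sense of quadratic forms on $\Chn$, and hence every factor $P_1 + j(j-1)$ with $j = 1,\dots,k$ is a strictly positive essentially self-adjoint operator on $L^2(\hn)$; since these factors commute, $P_k$ is itself strictly positive. Passing to the spectral resolution of $P_1$ gives, for $u \in \Chn$,
\begin{align*}
\|u\|_{k,g}^2 = \int_{1/4}^{\infty}\prod_{j=1}^{k}\bigl(\lambda + j(j-1)\bigr)\, d\mu_u(\lambda),
\end{align*}
where $\mu_u$ is the spectral measure of $u$ with respect to $P_1$. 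Since the integrand is strictly positive on the support of $\mu_u$, we read off $\|u\|_{k,g} = 0 \Rightarrow u \equiv 0$; symmetry of the bilinear form $(u,v)\mapsto\int_{\hn}(P_k u)v\, dv_g$ combined with Cauchy--Schwarz then yields the triangle inequality, establishing that $\|\cdot\|_{k,g}$ is a norm.

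For the equivalence in the case $N = 2k$, the upper bound $\|u\|_{k,g} \le \Theta\,\|u\|_{H^k(\hn)}$ is immediate from \eqref{expression for GJMS operator}: applying the triangle inequality to the $(-1)^{k-m}$-signed sum and using that each $\|\nabla^m_g u\|_{L^2}^2$, $0 \le m \le k$, is controlled by $\|u\|_{H^k(\hn)}^2$, one gets $\|u\|_{k,g}^2 \le C\|u\|_{H^k(\hn)}^2$. The reverse bound is the delicate direction. Since $|||u|||_{W^{k,2}(\hn)} = \|\nabla^k_g u\|_{L^2}$ is equivalent to $\|u\|_{H^k(\hn)}$, the task reduces to proving $\int_{\hn}|\nabla^k_g u|_g^2\, dv_g \le C\,\|u\|_{k,g}^2$.

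The crucial observation is that $N = 2k$ forces $\frac{N(N-2)}{4} = k(k-1)$, whence $-\Delta_g = P_1 + k(k-1)$ is itself a factor of $P_k$. Integration by parts together with the spectral calculus of $P_1$ then yields $\int_{\hn}|\nabla^k_g u|_g^2\, dv_g = \int_{1/4}^{\infty}(\lambda + k(k-1))^k\, d\mu_u(\lambda)$, so the desired inequality reduces to the one-dimensional polynomial comparison
\begin{align*}
(\lambda + k(k-1))^k \le C\prod_{j=1}^{k}\bigl(\lambda + j(j-1)\bigr),\qquad \lambda \ge \tfrac{1}{4}.
\end{align*}
Both sides are monic polynomials of degree $k$ in $\lambda$ with non-negative coefficients, so their quotient is continuous, strictly positive on $[\tfrac14,\infty)$, and tends to $1$ as $\lambda \to \infty$, and is therefore bounded above by a finite constant $C$. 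Integrating against $d\mu_u$ completes the proof. The main obstacle is precisely the need for the spectral gap $P_1 \ge \tfrac14$: if $0$ lay in the spectrum of $P_1$, the $j=1$ factor $\lambda$ would vanish at $\lambda = 0$ and no such polynomial comparison could hold, so the equivalence \eqref{equivalent} genuinely reflects the hyperbolic geometry rather than a purely algebraic feature of $P_k$.
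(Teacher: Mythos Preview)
The paper does not supply a proof of this lemma; it simply refers the reader to \cite{SD}. Your argument is therefore not competing against anything here, and on its own terms it is correct and complete. The key steps --- (i) the spectral lower bound $P_1 \ge \tfrac14$ coming from McKean's inequality $-\Delta_g \ge \tfrac{(N-1)^2}{4}$, (ii) the observation that $N=2k$ forces $\tfrac{N(N-2)}{4}=k(k-1)$ so that $-\Delta_g$ coincides with the last factor $P_1+k(k-1)$ in \eqref{GJMS}, and (iii) the reduction of the nontrivial direction of \eqref{equivalent} to the one-variable inequality $(\lambda+k(k-1))^k \le C\prod_{j=1}^k(\lambda+j(j-1))$ on $[\tfrac14,\infty)$ --- are all sound. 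The polynomial comparison is indeed a triviality once one notices both sides are monic of degree $k$ and the denominator stays bounded away from zero on the spectrum. One small remark: your passage to the spectral measure tacitly uses that $-\Delta_g$ (hence $P_1$) is essentially self-adjoint on $C^\infty_c(\hn)$ and that $C^\infty_c(\hn)\subset\mathrm{Dom}(P_1^k)$; both facts are standard for the Laplacian on complete manifolds, but since the lemma is stated for $u\in\Chn$ this is perfectly legitimate. Your closing comment that the equivalence genuinely needs the spectral gap (without it the $j=1$ factor $\lambda$ would kill the comparison at $\lambda=0$) is a nice conceptual point.
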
 

\subsection{Basic lemmas :}
 We devote this subsection to recall some  basic lemmas already proved in \cite{SD}. We will also derive
 a few local estimates, which will be useful for the later analysis. \\
  For an open set $U\subset \bn$ and a positive integer $k$ define
\begin{align*}
 ||u||_{H^k_g(U)} :=  \left[\sum_{m = 0}^{k} \int_{U} |\nabla^m_g u|^2_g  \ dv_g\right]^{\frac 12}\;, u \in C^{k}(\overline{U}) . 
\end{align*}

We have the following lemmas from \cite{SD} : 
\begin{lem} \label{basicl2}
 Let $k$ be any positive integer, and $V,U$ be open sets such that $\overline{V} \subset U \subset \overline{U} \subset \bn$
 with $N = 2k$, 
 then there exists a constant $C_0 > 0$ such that 
 \begin{align}
  ||u||_{H^{k}(V)} \leq C_0 ||u||_{H^k_g(U)}, \ \ \mbox{for all} \ u \in C^{k}(\overline{U}).
 \end{align}
\end{lem}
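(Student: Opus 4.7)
The plan is to exploit the fact that on $\overline{U}\subset\bn$, the conformal factor $\left(\frac{2}{1-|x|^2}\right)^2$ and all of its derivatives are smooth and bounded above and below by positive constants. This immediately yields that $dv_g$ and the Lebesgue measure $dx$ are comparable on $U$, the pointwise equivalence $|\nabla_g v|_g \asymp |\nabla v|$ holds on $\overline U$ by \eqref{gradlaplacian}, and $\Delta_g$ is a uniformly elliptic second order differential operator on $\overline U$ with smooth, bounded coefficients. By iteration, $\Delta_g^j$ is uniformly elliptic of order $2j$ on $\overline U$ with principal symbol $(-1)^j\left(\frac{1-|x|^2}{2}\right)^{2j}|\xi|^{2j}$, which is bounded away from zero.

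These comparisons let me convert the hyperbolic $H^k_g(U)$ norm into Euclidean quantities: for each even $m=2i\leq k$ the contribution $\int_U|\nabla_g^m u|_g^2\,dv_g$ is comparable to $\|\Delta_g^i u\|_{L^2(U)}^2$, while for each odd $m=2i+1\leq k$ it is comparable to $\|\nabla \Delta_g^i u\|_{L^2(U)}^2$. In particular, when $k=2j$ the hyperbolic norm controls $\|\Delta_g^j u\|_{L^2(V')}$, and when $k=2j+1$ it controls $\|\Delta_g^j u\|_{H^1(V')}$, for any intermediate $V'$ with $\overline V\subset V'\subset \overline{V'}\subset U$. I would then invoke the classical interior elliptic regularity estimate for higher order operators (Agmon--Douglis--Nirenberg, or iterated Calder\'on--Zygmund): for each integer $s\geq 0$ there exists $C=C(V,V',U,k,s)$ such that
\begin{align*}
\|u\|_{H^{2j+s}(V)}\leq C\bigl(\|\Delta_g^j u\|_{H^s(V')}+\|u\|_{L^2(V')}\bigr).
\end{align*}
Taking $s=0$ for even $k=2j$ and $s=1$ for odd $k=2j+1$, and combining with the previous paragraph, produces
\begin{align*}
\|u\|_{H^k(V)}\leq C\,\|u\|_{H^k_g(U)},
\end{align*}
as required.

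The main obstacle, which is mostly technical bookkeeping, is verifying that the constants in the interior regularity estimate are uniform on the chosen nested sets. This is automatic from the fact that the ellipticity constants and the $C^\infty$ coefficient bounds of $\Delta_g^j$ depend only on the compact set $\overline U$. Beyond this uniformity check, everything reduces to the pointwise equivalence $|\nabla_g v|_g\asymp|\nabla v|$ and the comparability of $dv_g$ with $dx$ on $\overline U$, so no analytic input beyond Euclidean elliptic theory is required.
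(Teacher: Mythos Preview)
The paper does not actually prove this lemma here; it is imported verbatim from \cite{SD} (see the sentence ``We have the following lemmas from \cite{SD}'' preceding the statement). So there is no in-paper proof to compare against.

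Your sketch is correct and is the natural argument. On the compact set $\overline U\subset\bn$ the conformal factor $\left(\frac{2}{1-|x|^2}\right)^2$ is smooth and bounded above and below, which gives $dv_g\asymp dx$, $|\nabla_g v|_g\asymp|\nabla v|$, and makes $\Delta_g^j$ a uniformly elliptic operator of order $2j$ with smooth coefficients on $\overline U$. Hence $\|u\|_{H^k_g(U)}$ dominates $\|\Delta_g^{\lfloor k/2\rfloor}u\|_{H^s(V')}+\|u\|_{L^2(V')}$ (with $s=0$ for $k$ even, $s=1$ for $k$ odd), and the interior Agmon--Douglis--Nirenberg estimate on the nested pair $V\subset\joinrel\subset V'$ returns the full Euclidean $H^k(V)$ norm. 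The only point worth spelling out in a final write-up is the choice of the intermediate set $V'$ and the observation that the ellipticity constant and coefficient bounds of $\Delta_g^j$ depend only on $\mathrm{dist}(\overline U,\partial\bn)$, so the resulting constant $C_0$ depends only on $V,U,k$ as claimed.
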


\begin{lem} \label{basicl4}
 Let $U,V, N, k$ be as in Lemma \ref{basicl2}, then there exists $q > 0$ and a positive constant $C_2 > 0,$ such that for all
 $u \in C^{\infty}(\overline{U}),$ with $||u||_{H^k_g(U)} < 1,$ satisfies,
 \begin{align}
  \int_V \left( e^{qu^2} - 1 \right ) \ dx \leq C_2
  \frac{||u||^2_{H^k_g(U)}}{1 - ||u||^2_{H^k_g(U)}}.
 \end{align}
\end{lem}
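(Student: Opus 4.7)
The idea is to reduce the problem to a Euclidean Adams inequality on a bounded Euclidean domain containing $V$, and then extract the ratio form $\alpha/(1-\alpha)$ by a short convexity observation.

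First, fix an intermediate open set $W$ with $\overline{V}\subset W\subset\overline{W}\subset U$ and a cutoff $\eta\in C^\infty_c(W)$ with $\eta\equiv 1$ on $V$ and $0\leq \eta\leq 1$. Extended by zero, $\eta u$ lies in $C^\infty_c(B_R)$ for any Euclidean ball $B_R$ containing $W$. Expanding $\nabla^k(\eta u)$ by the Leibniz rule and using Lemma \ref{basicl2} (applied with $W$ in place of $V$) to convert each Euclidean Sobolev norm of $u$ of order up to $k$ on $W$ into the hyperbolic norm $||u||_{H^k_g(U)}$, one obtains
\begin{align*}
||\nabla^k(\eta u)||_{L^2(B_R)}^2 \leq \tilde{C}^2\,||u||_{H^k_g(U)}^2 = \tilde{C}^2\alpha,\qquad \alpha := ||u||_{H^k_g(U)}^2 < 1.
\end{align*}
Setting $v := \eta u/(\tilde{C}\sqrt{\alpha})$, we have $||\nabla^k v||_{L^2(B_R)}\leq 1$, so Theorem \ref{Adams} (with $p=2=p'$, valid since $N=2k$) gives $\int_{B_R} e^{\beta_0 v^2}\,dx \leq C|B_R|$. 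Restricting to $V$, where $v=u/(\tilde{C}\sqrt{\alpha})$, and setting $q:=\beta_0/\tilde{C}^2$, this reads
\begin{align*}
\int_V \bigl(e^{qu^2/\alpha}-1\bigr)\,dx \leq C'
\end{align*}
for a constant $C'$ depending only on $V,W,\eta,k,N$.

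To finish, I would use the convexity of $\theta\mapsto e^{\theta x}$ on $[0,1]$ for each fixed $x\geq 0$, which gives $e^{\theta x}-1\leq \theta(e^x-1)$. Taking $\theta=\alpha\in[0,1)$ and $x=qu^2/\alpha$, integrating, and using $\alpha<1$ yields
\begin{align*}
\int_V \bigl(e^{qu^2}-1\bigr)\,dx \leq \alpha\int_V \bigl(e^{qu^2/\alpha}-1\bigr)\,dx \leq C'\alpha \leq C'\,\frac{\alpha}{1-\alpha},
\end{align*}
which is the desired estimate. The main obstacle is the first step: one must verify that the Leibniz expansion of $\nabla^k(\eta u)$ produces only terms consisting of derivatives of $\eta$ (smooth and bounded) multiplied by derivatives of $u$ of order at most $k$ on $W$, so that the full remainder can be absorbed by Lemma \ref{basicl2} into a constant multiple of $||u||_{H^k_g(U)}$. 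The value of $q=\beta_0/\tilde{C}^2$ is then forced to be small enough that the critical Adams threshold is not exceeded after the $1/\sqrt{\alpha}$ rescaling.
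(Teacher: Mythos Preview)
Your argument is correct. The paper does not reproduce a proof of this lemma here (it is imported from \cite{SD}), so there is no line-by-line comparison to make, but your route is sound: the cutoff $\eta$ together with Lemma~\ref{basicl2} (applied with $W$ in place of $V$) legitimately yields $||\nabla^k(\eta u)||_{L^2(B_R)}\le \tilde C\sqrt{\alpha}$, and Adams' inequality on $B_R$ then gives the uniform bound $\int_V(e^{qu^2/\alpha}-1)\,dx\le C'$. The convexity step $e^{\theta x}-1\le \theta(e^x-1)$ is a clean way to pass from this to $\int_V(e^{qu^2}-1)\,dx\le C'\alpha$, which is in fact stronger than the stated conclusion; you then weaken to $C'\alpha/(1-\alpha)$.

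For comparison, the paper's own proofs of the closely related Lemmas~\ref{Adams inequality with exact growth lemma 2} and~\ref{Adams inequality with exact growth lemma 3} use a bounded extension operator $T:H^k(V)\to H^k_0(\mathbb{B}^N)$ rather than a cutoff; the two localization devices are interchangeable here. The more characteristic approach to the ratio $\alpha/(1-\alpha)$ (and presumably the one in \cite{SD}) is to derive moment bounds $\int_V u^{2j}\,dx\le C j!\,(\alpha/q)^j$ from Adams and sum the geometric series $\sum_{j\ge1}\alpha^j=\alpha/(1-\alpha)$. Your convexity shortcut bypasses that computation and actually shows the estimate holds with the smaller right-hand side $C_2\,||u||^2_{H^k_g(U)}$, so nothing is lost. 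The only point that deserves a sentence of care is the one you flag yourself: for general $k$, the operator $\nabla^k$ is $\Delta^{k/2}$ or $\nabla\Delta^{(k-1)/2}$, and iterating the product rule for $\Delta$ indeed produces only terms $D^\beta\eta\cdot D^\gamma u$ with $|\gamma|\le k$, each supported in $W$, so the absorption into $||u||_{H^k(W)}\le C_0||u||_{H^k_g(U)}$ goes through.
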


We now require few more local estimates in the spirit of Lemma \ref{basicl4}. For the next two lemmas
we will restrict ourselves to dimension $4$ only.

\begin{lem} \label{Adams inequality with exact growth lemma 2}
 Let $V,U $ be open sets in $\mathbb{B}^4$ with smooth boundary such that 
 $\overline{V} \subset U \subset \overline{U} \subset \mathbb{B}^4,$  
 then there exists a constant $q_1 > 0$ such that for all  $u \in C^{\infty}(\overline{U})$ 
 with $||u||_{H^2_g(U)} \leq 1,$ there holds,
 \begin{align}
  \int_V \frac{e^{q_1 u^2} - 1}{(1 + \alpha_0 |u|)^2} \ dx \leq C_1 \int_{U} u^2 \ dv_g,
 \end{align}
where $C_1 $ is a positive constant and $\alpha_0 = \sqrt{\frac{q_1}{32 \pi^2}}.$
 \end{lem}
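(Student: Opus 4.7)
The plan is to reduce this local hyperbolic Adams inequality with exact growth to its Euclidean counterpart, Theorem \ref{MasoudiSani1} of Masmoudi--Sani, via a standard cutoff argument that exploits the fact that on any set compactly contained in $\mathbb{B}^4$ the Euclidean and hyperbolic geometries are uniformly comparable.

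First I would pick an intermediate smooth domain $W$ with $\overline{V}\subset W\subset\overline{W}\subset U$ together with a cutoff $\varphi\in C^{\infty}_{c}(W)$ satisfying $\varphi\equiv 1$ on $V$, and introduce $v:=\varphi u$, viewed as a function in $C^{\infty}_{c}(\mathbb{R}^4)$ after extension by zero. The Leibniz rule for $\Delta(\varphi u)$ combined with Lemma \ref{basicl2} (applied to the pair $(W,U)$) yields a constant $C_\ast\geq 1$, independent of $u$, such that
\[
\|v\|_{H^{2}(\mathbb{R}^4)}\;\leq\;C_\ast\,\|u\|_{H^{2}_{g}(U)}\;\leq\;C_\ast .
\]
In particular $\|\Delta v\|_{L^{2}(\mathbb{R}^4)}\leq C_\ast$, so $\tilde v:=v/C_\ast$ satisfies $\|\Delta\tilde v\|_{L^{2}(\mathbb{R}^4)}\leq 1$.

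Next I would apply Theorem \ref{MasoudiSani1} to $\tilde v$ to obtain
\[
\int_{\mathbb{R}^4}\frac{e^{32\pi^{2}\tilde v^{2}}-1}{(1+|\tilde v|)^{2}}\,dx\;\leq\;C\,\|\tilde v\|_{L^{2}(\mathbb{R}^4)}^{2},
\]
and then substitute $\tilde v=v/C_\ast$, setting
\[
q_1:=\frac{32\pi^{2}}{C_\ast^{2}},\qquad \alpha_0:=\frac{1}{C_\ast}=\sqrt{\frac{q_1}{32\pi^{2}}} .
\]
This rewrites as
\[
\int_{\mathbb{R}^4}\frac{e^{q_1 v^{2}}-1}{(1+\alpha_0|v|)^{2}}\,dx\;\leq\;\frac{C}{C_\ast^{2}}\,\|v\|_{L^{2}(\mathbb{R}^4)}^{2} .
\]
Observe that the relation $\alpha_0=\sqrt{q_1/(32\pi^{2})}$ required by the statement drops out automatically from rescaling $v$ by the single scalar $C_\ast$.

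Finally I would restrict the left-hand integral to $V$, where $\varphi\equiv 1$ and hence $v=u$, and bound the right-hand side via $\|v\|_{L^{2}(\mathbb{R}^{4})}^{2}\leq\|u\|_{L^{2}(W,\,dx)}^{2}$; since $\overline{W}\subset\mathbb{B}^4$ is compact, the conformal factor $\bigl(\tfrac{2}{1-|x|^{2}}\bigr)^{4}$ is bounded above and below on $W$, allowing one to pass from Lebesgue measure to $dv_g$ and absorb everything into $\int_{U}u^{2}\,dv_g$. The only delicate point, and the main bookkeeping obstacle, is tracking the constants so that both the sharp exponent $32\pi^{2}$ and the denominator power $2$ in Theorem \ref{MasoudiSani1} are preserved end-to-end; this is precisely what forces the choice of $q_1$ and $\alpha_0$ above and explains why the lemma's constant $q_1$ must be taken strictly below $32\pi^2$ rather than equal to it.
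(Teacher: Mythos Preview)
Your proposal is correct and follows essentially the same strategy as the paper's proof. The only cosmetic difference is that the paper invokes an abstract extension operator $T:H^2(V)\to H^2_0(\mathbb{B}^4)$ (with the properties $T(u)=u$ on $V$, $\|T(u)\|_{L^2}\leq C_2\|u\|_{L^2(V)}$, and $\|\Delta T(u)\|_{L^2}\leq \|T\|\,\|u\|_{H^2(V)}$), whereas you build the extension by hand with a cutoff $\varphi$ supported in an intermediate set $W$; in both cases one then rescales by a single constant so that Theorem~\ref{MasoudiSani1} applies, which is exactly what forces $q_1=32\pi^2\alpha_0^2$, and the $L^2$ right-hand side is finally absorbed into $\int_U u^2\,dv_g$ by comparability of $dx$ and $dv_g$ on a compact subset of $\mathbb{B}^4$.
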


 \begin{proof}
 Using lemma \ref{basicl2} we get, for the above choice of $V$ and $U,$ there exists a constant $C_0 > 0$ such that
 \begin{align*}
  ||u||_{H^{2}(V)} \leq C_0 ||u||_{H^2_g(U)}, \ \ \mbox{for all} \ u \in C^{\infty}(\overline{U}).
 \end{align*}

  There exists an  extension operator $T$ from $H^2(V)$ to $H^2_0(\mathbb{B}^4)$ such that for all 
  $u \in C^{\infty}(\overline{U})$ we have,
  \begin{equation} \label{required estimates1}
   \begin{cases}
   T(u) & \equiv u \ \ \mbox{on} \ V, \\
  ||T(u)||_{L^2(\mathbb{B}^4)} &\leq C_2 ||u||_{L^2(V)}, \ \ \mbox{for some constant} \ C_2 >0, \\
  ||\Delta T(u)||_{L^2(\mathbb{B}^4)} &\leq ||T||||u||_{W^{2,2}(V)} \leq C_0||T|| \ ||u||_{H^2_g(U)}
  \end{cases}
  \end{equation}
Let us fix $\frac{1}{\alpha_0} = C_0||T||,$ then for all $u \in C^{\infty}(\overline{U})$ with $||u||_{H^2_g(U)} \leq 1,$
we have
\begin{align}
 \int_{\bn} |\Delta \left(\alpha_0 T(u)\right)|^2 \ dx \leq 1.
\end{align}
Then by theorem \eqref{MasoudiSani1} we have,
\begin{align} \label{required estimates2}
 \int_{\mathbb{B}^4} \frac{e^{32\pi^2 \alpha_0^2 T(u)^2} - 1}{(1 + \alpha_0|T(u)|)^2} \ dx \leq C ||T(u)||_{L^2(\mathbb{B}^4)}.
\end{align}
Therefore setting $q_1 = 32 \pi^2 \alpha_0^2,$ we have from \eqref{required estimates1} and
\eqref{required estimates2},
   
\begin{align*}
 \int_V \frac{e^{q_1 u^2} - 1}{(1 + \alpha_0 |u|)^2} \ dx &\leq C_1 ||u||^2_{L^2(V)} \\
                                                       &\leq C_1 \int_{U} u^2 \ dv_g .
\end{align*}
This completes the proof.
 \end{proof}

\begin{lem} \label{Adams inequality with exact growth lemma 3}
Let $V,U $ be open sets in $\mathbb{B}^4$ with smooth boundary such that 
 $\overline{V} \subset U \subset \overline{U} \subset \mathbb{B}^4,$  
 then there exists a constant $q_2 > 0$ such that for all  $u \in C^{\infty}(\overline{U})$ 
 with $||u||_{H^2_g(U)} \leq 1,$ there holds,
 \begin{align}
  \int_V \left(e^{q_2 u^2} - 1 \right) \ dx \leq \tilde C_1 \int_{U} u^2 \ dv_g .
 \end{align}
\end{lem}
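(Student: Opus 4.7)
The plan is to deduce the estimate from the preceding exact-growth inequality, Lemma~\ref{Adams inequality with exact growth lemma 2}. The underlying idea is the classical one by which, in the Euclidean setting, the Adachi--Tanaka type inequality can be recovered from the exact-growth one: at the cost of lowering the exponential constant from $q_1$ to some $q_2 \in (0,q_1)$, the polynomial denominator $(1+\alpha_0 |u|)^2$ can be absorbed.

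Concretely, I would fix $q_2 \in (0,q_1)$ with $q_1,\alpha_0$ the constants produced by Lemma~\ref{Adams inequality with exact growth lemma 2}, and split $V$ according to $|u|\le 1$ versus $|u|>1$. On the set $\{|u|\le 1\}$, the elementary bound $e^t - 1 \le t e^t$ gives
\[
e^{q_2 u^2} - 1 \;\le\; q_2 u^2 e^{q_2 u^2} \;\le\; q_2 e^{q_2}\, u^2,
\]
and integrating yields $\int_{V \cap \{|u| \le 1\}} (e^{q_2 u^2} - 1)\, dx \le q_2 e^{q_2} \int_V u^2\, dx$; since $dx \le \tfrac{1}{16} dv_g$ on $\mathbb{B}^4$, this is controlled by $\int_U u^2\, dv_g$. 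On the set $\{|u| > 1\}$, I would write
\[
e^{q_2 u^2} \;=\; \frac{e^{q_1 u^2}}{(1+\alpha_0 |u|)^2} \cdot (1 + \alpha_0 |u|)^2 \, e^{-(q_1 - q_2) u^2},
\]
note that $M := \sup_{t \in \mathbb{R}} (1+\alpha_0 |t|)^2 e^{-(q_1-q_2) t^2} < \infty$ (polynomial beaten by Gaussian), and use that on $\{|u| > 1\}$ one has $e^{q_1 u^2} \le \tfrac{e^{q_1}}{e^{q_1}-1} (e^{q_1 u^2} - 1)$. Combining,
\[
\int_{V \cap \{|u| > 1\}} (e^{q_2 u^2} - 1)\, dx \;\le\; \frac{M e^{q_1}}{e^{q_1} - 1} \int_V \frac{e^{q_1 u^2} - 1}{(1+\alpha_0 |u|)^2}\, dx \;\le\; \frac{M C_1 e^{q_1}}{e^{q_1} - 1} \int_U u^2\, dv_g
\]
by Lemma~\ref{Adams inequality with exact growth lemma 2}. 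Adding the two contributions yields the desired $\tilde C_1$.

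There is no serious obstacle here: the only things to verify are the finiteness of $M$, which is automatic, and the (local) comparability of $dx$ and $dv_g$. An equally natural alternative, mirroring the structure of the proof of Lemma~\ref{Adams inequality with exact growth lemma 2}, is to employ the extension operator $T : H^2(V) \to H^2_0(\mathbb{B}^4)$ from the proof of that lemma, set $\alpha_0 = 1/(C_0 \|T\|)$ so that $\|\Delta(\alpha_0 T(u))\|_{L^2(\mathbb{B}^4)} \le 1$, and invoke Theorem~\ref{MasoudiSani2} (Masmoudi--Sani's Euclidean Adachi--Tanaka inequality in $\mathbb{R}^4$) with any fixed $\alpha \in (0,32\pi^2)$; then $q_2 = \alpha \alpha_0^2$ works, after restricting to $V$ and comparing $dx$ with $dv_g$.
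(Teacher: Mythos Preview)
Your proposal is correct. The paper itself omits the proof, remarking only that it ``goes in the same line as in Lemma~\ref{Adams inequality with exact growth lemma 2}''; this is precisely your second alternative (extension operator $T$, then invoke Theorem~\ref{MasoudiSani2} in place of Theorem~\ref{MasoudiSani1}).

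Your primary argument takes a genuinely different route: rather than re-running the extension construction and appealing to the Euclidean Adachi--Tanaka result, you deduce the lemma directly from Lemma~\ref{Adams inequality with exact growth lemma 2} by a pointwise splitting $\{|u|\le 1\}\cup\{|u|>1\}$ and the elementary observation that lowering the exponent from $q_1$ to any $q_2<q_1$ lets a Gaussian factor absorb the polynomial denominator $(1+\alpha_0|u|)^2$. This is slightly more self-contained, since it avoids a second invocation of a black-box Euclidean theorem and shows transparently that Lemma~\ref{Adams inequality with exact growth lemma 3} is a formal consequence of Lemma~\ref{Adams inequality with exact growth lemma 2}. The paper's intended route is shorter to state but outsources the work to Theorem~\ref{MasoudiSani2}. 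Both are entirely adequate here.
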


The proof of Lemma \ref{Adams inequality with exact growth lemma 3} goes in the same line as in 
Lemma \ref{Adams inequality with exact growth lemma 2}, so we can omit the proof. 

Finally we end this section with the following covering lemma which will help, together with the above local estimates,
to bound the integral near infinity (see \cite{SD}, \cite{AdiT} for a proof). 
\begin{lem} \label{basicl5}
 Let $U,V$ be any open sets in $\bn$ such that, $\overline{V} \subset U \subset \overline{U} \subset \bn,$ then there
 exists a countable collection $\{b_i\}^{\infty}_{i = 1}$ of elements in $\bn,$ and  a positive number $M_0 \in \mathbb{N},$
 such that, 
\item[(i).] $\{\tau_{b_i}(V)\}^{\infty}_{i = 1}$ covers $\bn$ with multiplicity not exceeding $M_0,$
\item[(ii).] $\{\tau_{b_i}(U)\}^{\infty}_{i = 1}$  have multiplicity not exceeding $M_0.$
\end{lem}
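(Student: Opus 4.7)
The plan is to combine a standard separated-set packing argument with the homogeneity of the hyperbolic metric and the isometric action of the translations $\tau_b$. I would start by fixing a point $p \in V$ and a hyperbolic radius $r > 0$ such that the geodesic ball $B_g(p, r)$ is contained in $V$; since $\overline{U}$ is a compact subset of $\bn$, the hyperbolic diameter $D := \mathrm{diam}_g(U)$ is finite. Noting that the map $b \mapsto \tau_b(p)$ extends continuously to a map $\overline{\bn} \to \overline{\bn}$ that agrees with the identity on $\partial \bn$ (an elementary check using \eqref{hyperbolictranslation}), the family $\{\tau_b\}_{b \in \bn}$ acts transitively on $\bn$, so for every $y \in \bn$ there is some $b \in \bn$ with $\tau_b(p) = y$.

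Next I would construct the centres $\{b_i\}$ via a maximal separated sequence. Fix $\delta \in (0, r/2)$ and, using a greedy exhaustion, pick a countable set $\{y_i\}_{i=1}^{\infty} \subset \hn$ maximal with respect to the property $d_g(y_i, y_j) \geq \delta$ for $i \neq j$; maximality forces $\hn = \bigcup_i B_g(y_i, \delta)$. Using the transitivity just noted, choose $b_i \in \bn$ with $\tau_{b_i}(p) = y_i$. Since $\tau_{b_i}$ is a hyperbolic isometry,
\begin{equation*}
\tau_{b_i}(V) \;\supset\; \tau_{b_i}\bigl(B_g(p,r)\bigr) \;=\; B_g(y_i, r) \;\supset\; B_g(y_i, \delta),
\end{equation*}
so $\{\tau_{b_i}(V)\}$ already covers $\hn$.

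For the multiplicity estimate, suppose $x \in \tau_{b_{i_1}}(U) \cap \cdots \cap \tau_{b_{i_m}}(U)$ and write $x = \tau_{b_{i_j}}(u_j)$ with $u_j \in U$. Since $p \in V \subset U$ and $\mathrm{diam}_g(U) = D$, we have $d_g(u_j,p) \leq D$, and the isometry property gives $d_g(x, y_{i_j}) = d_g(u_j,p) \leq D$. Hence all the $y_{i_j}$ lie in $B_g(x, D)$, and the $\delta$-separation property makes the balls $B_g(y_{i_j}, \delta/2)$ pairwise disjoint and contained in $B_g(x, D + \delta/2)$, so
\begin{equation*}
m \;\leq\; \frac{\mathrm{Vol}_g\bigl(B_g(x, D + \delta/2)\bigr)}{\mathrm{Vol}_g\bigl(B_g(0, \delta/2)\bigr)} \;=:\; M_0,
\end{equation*}
a uniform constant by the homogeneity of $\hn$. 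Since $V \subset U$, the same estimate bounds the multiplicity in (i).

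The only delicate point in the argument is really the transitivity claim for $\{\tau_b\}_{b \in \bn}$, which has to be checked directly against the explicit formula \eqref{hyperbolictranslation} because these translations do not form a group in general, together with the choice of $\delta$ small compared to the inner radius $r$ of $V$. Once these are secured, the rest is a textbook volume-based packing count in a homogeneous Riemannian space.
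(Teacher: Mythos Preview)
The paper does not prove this lemma itself but defers to \cite{SD} and \cite{AdiT}; your maximal $\delta$-separated net combined with the volume-packing count is precisely the standard argument used there, and it is correct. One small point worth making explicit: from the continuous extension of $b\mapsto\tau_b(p)$ to the identity on $\partial\bn$ you are tacitly using that such a self-map of $\overline{\bn}$ must be onto $\bn$ (otherwise one manufactures a retraction $\overline{\bn}\to\partial\bn$); since the $\tau_b$ do not form a group, transitivity is not purely formal and this topological step deserves one sentence. Your caution about $\delta$ being small relative to $r$ is unnecessary---any $\delta\le r$ already yields the covering, and the packing bound is insensitive to that ratio.
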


\section{Main Lemma}
In this section we will prove two basic lemmas which will be needed for the proof of theorem \ref{main pll1} and 
theorem \ref{main pll 3}. Before we proceed let us recall a few things about decreasing rearrangement
with respect to the Lebesgue measure in the Euclidean space. 

\vspace{0.5 cm}

Let $\Omega$ be a bounded open set in $\rn,$ and let $f$ be a real valued measurable function defined on $\Omega.$
For a measurable set $A$ in $\rn,$ let $|A|$ denotes the Lebesgue measure of the set $A.$ Let $\mu_f$ denotes
the distribution function of $f,$
\begin{align*}
 \mu_f(t) := |\{x \in \Omega : |f(x)| > t\}|.
\end{align*}
The decreasing rearrangement $f^{\#}$ of $f$ is defined as:
\begin{align*}
 f^{\#}(s) := \sup \{t \geq 0 : \mu_f(t) > s \}, \ \ s \in [0, |\Omega|].
\end{align*}
Let $\Omega^{*}$ be the ball with centre at the origin and having same measure as $\Omega.$ Then the spherically
symmetric decreasing rearrangement $f^{*}$ of $f$ is defined by,
\begin{align*}
 f^{*}(x) := f^{\#}(\sigma_N |x|^N), \ \ x \in \Omega^{*}.
\end{align*}
where $\sigma_N$ is the volume of the unit ball in $\rn.$

One can also define spherically symmetric decreasing rearrangement for a measurable function $f$ defined on $\rn$ which 
vanishes at infinity. 

Let $f : \rn \rightarrow \mathbb{R}$ be a measurable function, $f$ is said to vanishes at infinity if 
$|\{ |f| > t\}|$ has finite measure for all $t > 0.$ Then define,
\begin{align*}
 f^{*}(x) := \int_0^{\infty} \chi_{\{|f| > t\}^{*}} (x) \ dt,
\end{align*}

For a profound discussion on the symmetric decreasing rearrangement we refer to \cite{Lieb}, \cite{Kesavan}.

Now we will be able to state and prove a comparison result which will enable us to estimate 
the integral in the interior. This is the main ingredient to prove Theorem \ref{main pll 3}.

\begin{lem} \label{main lemma}
 Let $0 < R < 1$ and $V = B(0,R),$ then there exists a constant $c_0 >0,$ such that for $u \in \Cbn,$ there exists a
 function $v \in C^{2,\gamma}_{loc}(\bn),$ satisfying the following properties :
 \item[(i).] $\int_{\hn} |\Delta_g u|^{\frac{N}{2}} \ dv_g = \int_{\bn} |\Delta v|^{\frac{N}{2}} \ dx,$
 \item[(ii).] $ v \equiv 0$ on $\partial \bn$
 \item[(iii).] For any $\beta > 0,$ if we set $c(u) := c_0 ||\Delta_g u||_{L^{\frac{N}{2}}(\hn)},$ then 
 \begin{align}
  \int_V \left(e^{\beta |u|^{\frac{N}{N - 2}}} - 1\right) \ dx \leq
  \int_V \left(e^{\beta \left |v + c(u) \right |^{\frac{N}{N - 2}}} - 1\right) \ dx
 \end{align}
\end{lem}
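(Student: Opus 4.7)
My plan is a Talenti-style transplantation that replaces $u$ by a radially symmetric Euclidean Dirichlet solution $v$ with the \emph{same} $L^{N/2}$ norm of the Laplacian, at the cost of an additive error $c(u)$ that absorbs the geometric mismatch between $\hn$ and $\bn$.

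\textbf{Construction of $v$.} I first set $f := -\Delta_g u \in \Cbn$ and introduce a radial non-increasing function $\tilde f$ on $\bn$ characterised by the equidistribution
\begin{align*}
\bigl|\{x \in \bn : \tilde f(x) > t\}\bigr|_{\mathrm{Eucl}} = |\{|f|>t\}|_g , \qquad t>0.
\end{align*}
This is a standard measure-preserving rearrangement between two different measures, and the layer-cake formula yields $\|\tilde f\|_{L^{N/2}(\bn, dx)}^{N/2}=\|f\|_{L^{N/2}(\hn, dv_g)}^{N/2}$. Next I let $v$ be the unique radial classical solution of
\begin{align*}
-\Delta v = \tilde f \text{ in } \bn, \qquad v = 0 \text{ on } \partial \bn,
\end{align*}
which lies in $C^{2,\gamma}_{\mathrm{loc}}(\bn)$ by elliptic regularity. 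Properties (i) and (ii) are then immediate.

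\textbf{Pointwise comparison.} For (iii) I aim at the pointwise bound
\begin{align*}
|u(x)| \leq v(x) + c(u) \qquad \text{for all } x \in V,
\end{align*}
which, combined with the monotonicity of $\Phi(t) := e^{\beta |t|^{N/(N-2)}}-1$, gives the integral inequality on $V$. My route is: (a) replace $|u|$ by its hyperbolic symmetric decreasing rearrangement $u^{\ast,g}$ centered at the origin, noting that because $V$ is a hyperbolic ball around $0$ and the conformal factor $\lambda(x) = \left(\frac{2}{1-|x|^2}\right)^N$ converting $dv_g$ to $dx$ is radial and monotone, the Hardy--Littlewood inequality on $V$ with this weight yields $\int_V \Phi(|u|)\,dx \leq \int_V \Phi(u^{\ast,g})\,dx$; (b) represent $u^{\ast,g}$ by the iterated integral formula for radial solutions of $-\Delta_g u = f$ on $\hn$; (c) perform the change of variables $s_g = 2\tanh^{-1}(s_e)$ to rewrite this in Euclidean radial coordinates on $\bn$; and (d) compare it term by term with the analogous iterated integral formula for $v(x) = \int_0^1 G_e^{\mathrm{rad}}(|x|, s)\,\tilde f(s)\, s^{N-1}\,ds$, using Hardy--Littlewood applied to $f$ versus $\tilde f$ (equidistributed under the chosen measures) and bounding the residual kernel difference, which is smooth on the compact set $\overline{V} \times \overline{\bn}$, by H\"older's inequality against $\tilde f$ to produce the additive constant $c_0 \|f\|_{L^{N/2}(\hn)} = c(u)$.

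\textbf{Main obstacle.} The chief technical difficulty is the last step: one needs a sharp enough kernel comparison between the radial hyperbolic Green kernel and the Euclidean Dirichlet Green kernel on $\bn$, so that after subtracting they differ by a function bounded on $\overline V \times \overline{\bn}$, giving an additive constant of the form $c_0 \|f\|_{L^{N/2}}$ uniform for $x \in V$, with $c_0$ depending only on $R$ and $N$. A secondary subtle point is justifying in step~(a) that hyperbolic symmetrization does not increase the integrand $\Phi(|u|)\lambda^{-1}$ on $V$; this needs the monotonicity of $\lambda^{-1}$ in $|x|$ and the fact that $V$ is a \emph{hyperbolic} ball around the centre of symmetrization, so that the HL inequality applies cleanly. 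Once these two points are set, the remainder of the argument is routine bookkeeping.
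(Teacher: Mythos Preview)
Your construction of $v$ differs from the paper's: the paper does not set $f=-\Delta_g u$ and transfer measures. Instead it exploits the conformal relation
\[
\Delta_g u = \Bigl(\tfrac{1-|x|^2}{2}\Bigr)^{2} f,\qquad f=\Delta u + (N-2)\tfrac{2}{1-|x|^2}\langle x,\nabla u\rangle \in C^\infty_c(\bn),
\]
so that $\int_{\bn}|f|^{N/2}\,dx=\int_{\hn}|\Delta_g u|^{N/2}\,dv_g$, and then takes $v$ to be the Dirichlet solution of $-\Delta v = |f|^{*}$ on $\bn$ with the \emph{Euclidean} rearrangement $|f|^{*}$. The proof then never rearranges $u$ and never aims at a pointwise estimate $|u|\le v+c(u)$: it writes $\psi u$ via the Newtonian potential, bounds $|\Delta u|\le |f|+$\,(lower order), applies the Riesz rearrangement inequality to the convolution $\int F(x-y)|f|(y)\,dy$, compares $F$ with the Dirichlet Green function $G$ on $\overline V\times\bn$ (a bounded error), and finally obtains $\int_V|u|^p\,dx\le\int_V|v+c(u)|^p\,dx$ for every $p\ge 1$ by duality, which is enough for (iii).

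Your route has a genuine gap at step~(b). The hyperbolic rearrangement $u^{*,g}$ is radial, but it is \emph{not} a solution of any equation involving $f$, $|f|^{*,g}$, or $\tilde f$; there is no ``iterated integral formula'' representing $u^{*,g}$ in terms of these data. What you implicitly need is a Talenti comparison on $\hn$ (i.e.\ $u^{*,g}\le w_g$ with $-\Delta_g w_g=|f|^{*,g}$), which relies on the hyperbolic isoperimetric inequality and is not stated, and which moreover does not directly produce your $\tilde f$ built from a cross-measure rearrangement. Even granting such a Talenti step, your step~(d) would then require comparing the \emph{hyperbolic} radial Green kernel against the \emph{Euclidean} Dirichlet Green kernel, with an additional Jacobian coming from the change of measure between $|f|^{*,g}$ and $\tilde f$; showing this residual kernel is bounded on $\overline V\times\overline{\bn}$ is substantially harder than the paper's comparison of $F$ with $G$, both Euclidean. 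In short, the bridge between $u^{*,g}$ and $v$ is missing, and the pointwise bound you are after is stronger than what the argument actually needs; the paper's potential-theoretic approach via Riesz rearrangement avoids both difficulties.
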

\begin{proof}
 Let $u \in \Cbn,$ then for all $x \in \bn,$
 \begin{align}
  \Delta_g u (x) = \left(\frac{1 - |x|^2}{2}\right)^2 \Delta u + 
  (N - 2) \left(\frac{1 - |x|^2}{2}\right) \langle x, \nabla u (x)\rangle 
   = \left(\frac{1 - |x|^2}{2}\right)^2 f(x),
 \end{align}
where,
\begin{align} \label{definef}
 f(x) = \Delta u(x) + (N - 2)\left(\frac{2}{1 - |x|^2}\right) \langle x, \nabla u(x)\rangle .
\end{align}
Since $u$ has compact support in $\bn,$ we have $f \in \Cbn,$ and 
\begin{align}
 \int_{\bn} |f|^{\frac{N}{2}} \ dx = \int_{\hn} |\Delta_g u|^{\frac{N}{2}} \ dv_g.
\end{align}
Let $v$ be the solution of the problem :
\begin{align} \label{definev}
  - \Delta v  &= |f|^{*} \ \ \ \mbox{in} \ \bn  \notag \\
   v &= 0 \ \ \ \ \ \ \mbox{on} \ \partial \bn.
\end{align}
  
Since $f \in \Cbn,|f|$ is atleast a Lipschitz function and hence $|f|^{*}$ is also Lipschitz. Therefore by 
elliptic regularity we conclude that $v \in C^{2,\gamma}_{loc}(\bn),$ for some $\gamma > 0.$ Then one can write $v$ as,
\begin{align}
 v(x) = \int_{\bn} G(x,y) |f|^{*}(y) \ dy ,
\end{align}
where $G$ is the Green's function for the Laplacian in the ball $\bn$ with Dirichlet boundary condition.
We claim that this $v$ is the desired function. To prove this let $U$ be an open set in $\bn$ such that
$\overline{V} \subset U \subset \overline{U} \subset \bn.$ Let $\psi$ be a smooth cut-off function such that,
\begin{align*}
 0 \leq \psi \leq 1, \ \psi \equiv 1 \ \mbox{on} \ \overline{V}, \ \mbox{and} \ \mbox{supp} \ \psi \subset U.
\end{align*}
Therefore we can write $\psi u$ as
\begin{align} \label{psiu}
 (\psi u )(x) = \int_{\bn} F(x - y) (- \Delta) (\psi u)(y) \ dy,
\end{align}
where $F$ is the fundamental solution of the Laplacian on $\rn.$
Now we can expand $ \Delta (\psi u)(y)$ and it gives,
\begin{align} \label{deltak}
 - \Delta (\psi u)(y) = \psi(y)  \Delta (u)(y) + 2\langle \nabla \psi (y), \nabla u (y)\rangle + u(y) \Delta \psi (y) ,
\end{align}
Since supp$(\psi)$ is contained in $U,$ we get from \eqref{psiu} and \eqref{deltak},
\begin{align} \label{symmetry1}
 |(\psi u)(x)| &\leq \int_{\bn} F(x - y) \psi(y)|\Delta u(y)| \ dy  
 +  \int_U F(x - y)|2\langle \nabla \psi, \nabla u\rangle + u \Delta \psi| \ dy \notag \\
 &\leq \int_{\bn} F(x - y) \psi(y)|\Delta u(y)| \ dy 
 + C \sum_{|\alpha| \leq 1} \int_U |x - y|^{2 - N} |D^{\alpha} u(y)| \ dy.
\end{align}
Now from \eqref{definef} we get,
\begin{align*}
|\Delta u(y)| \leq |f(y)| + (N - 2) \left(\frac{2}{1 - |y|^2}\right) |\nabla  u(y)|,
\end{align*}
which together with \eqref{symmetry1} gives,
\begin{align} \label{symmetry2}
 |(\psi u)(x)| &\leq \int_{\bn} F(x - y)|f(y)| \ dy 
 + C\sum_{|\alpha| \leq 1} \int_U |x - y|^{2 - N} |D^{\alpha} u(y)| \ dy.
\end{align}
We will first estimate the second term in \eqref{symmetry2}.  
Let $\alpha$ be any multi index such that $|\alpha| \leq 1,$ and $\phi$ be any function such that 
$\phi \in W^{2, \frac{N}{2}}(U),$ then $D^{\alpha} \phi \in W^{1, \frac{N}{2}}(U).$ Therefore by
Sobolev embedding theorem we have,
\begin{align} \label{GNS}
 \int_U |D^{\alpha} \phi|^N \ dy &\leq C || D^{\alpha} \phi ||^N_{W^{1,\frac{N}{2}}(U)} 
 \leq C ||\phi ||^N_{W^{2,\frac{N}{2}}(U)}.
\end{align}
Therefore from \eqref{symmetry2} we get,
\begin{align} \label{symmetry3}
 |(\psi u)(x)| &\leq \int_{\bn} F(x - y)|f(y)| \ dy \notag \\
 &+ C\sum_{|\alpha| \leq 1} \left(\int_U |x - y|^{(2 - N)\frac{N}{N - 1}} \ dy \right)^{\frac{N - 1}{N}}
 \left(\int_U |D^{\alpha} u|^N \ dy \right)^{\frac{1}{N}}
\end{align}
Now we see that $U \subset B(x,2)$ when $|x| < 1,$ so that,
\begin{align*}
 \int_U |x - y|^{(2 - N)\frac{N}{N - 1}} \ dy &\leq \int_{B(x,2)} |x - y|^{(2 - N)\frac{N}{N - 1}} \ dy \\
 &\leq C \int_{\{|y| < 2\}} |y|^{(2 - N)\frac{N}{N - 1}} \ dy \leq C.
\end{align*}
From \eqref{symmetry3} and \eqref{GNS} and using $\psi \equiv 1$ on $V,$ we get,
\begin{align} \label{symmetry4}
 | u(x)| &\leq \int_{\bn} F(x - y)|f(y)| + C ||u||_{W^{2,\frac{N}{2}}(U)}, \ \ \mbox{for all} \ x \in V
\end{align}

Let $r$ be any real number such that $0 < r \leq R,$ and $\phi \geq 0$ be a smooth function such that supp($\phi$) 
lies in $B(0,r).$ Multiplying \eqref{symmetry4} by $\phi$ and integrating over $B(0,r)$ we get,
\begin{align}\label{symmetry5}
 \int_{B(0,r)} |u(x)|\phi(x) \ dx &\leq \int_{B(0,r)} \int_{\bn} \phi(x) F(x - y)|f(y)| \ dy dx \notag \\
 &+ C ||u||_{W^{2,\frac{N}{2}}(U)} \int_{B(0,r)} \phi(x) \ dx,
\end{align} 
Since supp($f$) in contained in $\bn,$ extending by zero outside $\bn$ we can write \eqref{symmetry5} as,
\begin{align} \label{symmetry6}
 \int_{B(0,r)} |u(x)| \phi(x) \ dx &\leq \int_{\rn} \int_{\rn} \phi(x) F(x - y)|f(y)| \ dy dx \notag \\
 &+ C ||u||_{W^{2,\frac{N}{2}}(U)} \int_{B(0,r)} \phi(x) \ dx,
\end{align}

Now applying Riesz inequality (\cite{Lieb}, Theorem 3.7) for the convolution of symmetric decreasing rearrangement 
we get from \eqref{symmetry6},
\begin{align} \label{symmetry7}
 \int_{B(0,r)} |u(x)| \phi(x) \ dx &\leq \int_{\rn} \int_{\rn} \phi^{*}(x) F^{*}(x - y)|f|^{*}(y) \ dy dx \notag \\
 &+ C ||u||_{W^{2,\frac{N}{2}}(U)} \int_{B(0,r)} \phi(x) \ dx, \notag \\
 &\leq \int_{\rn} \int_{\rn} \phi^{*}(x) F(x - y)|f|^{*}(y) \ dy dx \notag \\
 &+ C ||u||_{W^{2,\frac{N}{2}}(U)} \int_{B(0,r)} \phi(x) \ dx, \notag \\
 &\leq \int_{B(0,r)} \phi^{*}(x) \int_{\bn} F(x - y)|f|^{*}(y) \ dy dx \notag \\
 &+ C ||u||_{W^{2,\frac{N}{2}}(U)} \int_{B(0,r)} \phi(x) \ dx,
\end{align}

Now for $0 < |x| \leq R$ and $y \in \bn,$ we have $|G(x,y) - F(x - y)| \leq C_1(R),$ and so,
\begin{align} \label{mainsymmetry}
 \int_{\bn} F(x - y)|f|^{*}(y) \ dy &\leq \int_{\bn} G(x,y)|f|^{*}(y) \ dy 
 + C_1(R) \int_{\bn} |f|^{*}(y) \ dy \notag \\
 &\leq v(x) + C_2 \left(\int_{\bn} (|f|^{*}(y))^{\frac{N}{2}} \ dy\right)^{\frac{2}{N}} \notag \\
 &\leq v(x) + C_2 \left(\int_{\hn} |\Delta_g u|^{\frac{N}{2}} \ dv_g \right)^{\frac{2}{N}}.
\end{align}
One can easily show that there exists a constant $C>0$ such that for all $u \in \Cbn,$ there holds
\begin{align} \label{mainsymmetry1}
 ||u||_{W^{2,\frac{N}{2}}(U)} \leq C ||\Delta_g u||_{L^{\frac{N}{2}}(\hn)}
\end{align}

Hence from \eqref{symmetry7},\eqref{mainsymmetry} and \eqref{mainsymmetry1} we get,
\begin{align} \label{symmetry11}
 \int_{B(0,r)} |u(x)| \phi(x) \ dx \leq \int_{B(0,r)} \phi^{*}(x) v(x) \ dx + 
 c_0||\Delta_g u||_{L^{\frac{N}{2}}(\hn)}\int_{B(0,r)} \phi^{*}(x) \ dx.
\end{align}
Setting $c(u) = c_0||\Delta_g u||_{L^{\frac{N}{2}}(\hn)},$ we get from \eqref{symmetry11},
\begin{align} \label{symmetry12}
 \int_{B(0,r)} |u(x)| \phi(x) \ dx \leq \int_{B(0,r)} \phi^{*}(x) (v(x) + c(u)) \ dx, \ \ \mbox{for all} \ 0 < r \leq R.
\end{align}

Since the above inequality \eqref{symmetry12} is true for all nonnegative smooth functions $\phi$ having support
inside $B(0,R)$ and $L^p$-norm remains unchanged under symmetric decreasing rearrangement we have,
\begin{align} \label{finalsymmetry}
 \int_{V} |u|^{p} \ dx \leq \int_{V} |v + c(u)|^p \ dx, \ \ \ \mbox{for all} \ 1 \leq p < + \infty.
\end{align}
Hence we have from \eqref{finalsymmetry}
\begin{align} \label{finalcomparison}
  \int_V  \left(e^{\beta |u|^{\frac{N}{N - 2}}} - 1\right) \ dx \leq 
  \int_V  \left(e^{\beta |v + c(u)|^{\frac{N}{N - 2}}} - 1\right) \ dx,
\end{align}
where we used both the integral in \eqref{finalcomparison} is finite. This complete the proof of 
the lemma.
\end{proof}

We also require the following lemma to estimate the integral at infinity. 
\begin{lem} \label{test function}
  Given a positive integer $k,$ there exists a constant $C_0 > 0,$ such that for any $R>0$ and $\delta > 0,$ 
  there exists $\phi_{R,\delta}$ such that,
  \begin{align*}
  (i). \ \ &\phi_{R,\delta} \in C^{\infty}_c(B(0, R + \delta)),  \qquad \qquad \qquad \qquad \qquad \qquad \qquad \qquad
  \qquad \qquad \qquad \\ 
  (ii). \ \ &0 \leq \phi_{R,\delta} \leq 1,\ \mbox{and} \ \phi_{R,\delta} \equiv 1 \ \mbox{on} \ B(0,R), \\
  (iii). \ \ &\sum_{1 \leq |\alpha| \leq 2k} \delta^{|\alpha|} ||D^{\alpha} \phi_{R,\delta}||_{L^{\infty}} \leq C_0.  
  \end{align*}
 \end{lem}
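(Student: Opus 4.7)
The plan is to construct $\phi_{R,\delta}$ by convolving the characteristic function of a ball of intermediate radius with a scaled mollifier. This yields a smooth cutoff whose successive derivatives scale correctly in $\delta$ and, crucially, do not pick up any dependence on $R$.

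Concretely, fix once and for all a nonnegative radial function $\eta \in C^{\infty}_c(\mathbb{R}^N)$ with $\operatorname{supp}\eta \subset B(0,1)$ and $\int_{\mathbb{R}^N} \eta\, dx = 1$, and for $\varepsilon > 0$ set $\eta_{\varepsilon}(x) := \varepsilon^{-N}\eta(x/\varepsilon)$. Given $R,\delta > 0$, define
\begin{align*}
\phi_{R,\delta}(x) := \bigl(\chi_{B(0,\,R+\delta/2)} * \eta_{\delta/4}\bigr)(x) = \int_{B(0,\,R+\delta/2)} \eta_{\delta/4}(x-y)\,dy.
\end{align*}
Smoothness and the bound $0 \leq \phi_{R,\delta} \leq 1$ are immediate. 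For (i), if $|x| \geq R+\delta$ then every $y$ with $|x-y| < \delta/4$ satisfies $|y| > R + 3\delta/4 > R + \delta/2$, so the integrand vanishes and $\phi_{R,\delta}(x) = 0$; thus $\operatorname{supp}\phi_{R,\delta} \subset \overline{B(0,\,R+3\delta/4)} \subset B(0,R+\delta)$. For (ii), if $|x| \leq R$ then for every $y$ with $|x-y| < \delta/4$ we have $|y| \leq R + \delta/4 < R + \delta/2$, so $\chi_{B(0,R+\delta/2)}(y)=1$ throughout the convolution region, giving $\phi_{R,\delta}(x) = \int \eta_{\delta/4} = 1$.

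For the derivative estimates (iii), move the derivatives onto the mollifier: $D^{\alpha}\phi_{R,\delta} = \chi_{B(0,R+\delta/2)} * D^{\alpha}\eta_{\delta/4}$. Young's inequality and the scaling identity $\|D^{\alpha}\eta_{\delta/4}\|_{L^1(\mathbb{R}^N)} = (\delta/4)^{-|\alpha|}\|D^{\alpha}\eta\|_{L^1(\mathbb{R}^N)}$ give
\begin{align*}
\|D^{\alpha}\phi_{R,\delta}\|_{L^{\infty}} \leq \|D^{\alpha}\eta_{\delta/4}\|_{L^1} = 4^{|\alpha|}\delta^{-|\alpha|}\|D^{\alpha}\eta\|_{L^1},
\end{align*}
so that $\delta^{|\alpha|}\|D^{\alpha}\phi_{R,\delta}\|_{L^{\infty}} \leq 4^{|\alpha|}\|D^{\alpha}\eta\|_{L^1}$. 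Summing over $1 \leq |\alpha| \leq 2k$ yields the claim with $C_0 = C_0(k) := \sum_{1\leq|\alpha|\leq 2k} 4^{|\alpha|}\|D^{\alpha}\eta\|_{L^1(\mathbb{R}^N)}$, a constant depending only on $k$ and the fixed mollifier $\eta$.

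The only (mild) obstacle is to avoid constants that deteriorate as $R \to 0$. The naive one-variable ansatz $\phi_{R,\delta}(x) = \rho\bigl((|x|-R)/\delta\bigr)$ for a fixed cutoff $\rho \in C^{\infty}(\mathbb{R})$ fails whenever $\delta$ is large compared with $R$, because the higher derivatives of $|x|$ blow up like $|x|^{1-|\alpha|}$ on the transition annulus $\{R \leq |x| \leq R+\delta\}$, producing factors of $R^{1-|\alpha|}$ in the estimates. Mollifying the characteristic function sidesteps this issue entirely, since the derivative is absorbed entirely by the mollifier whose $L^{1}$ norm scales purely in $\delta$.
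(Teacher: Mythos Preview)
Your proof is correct and follows essentially the same approach as the paper: both mollify a rough cutoff (you use the characteristic function of $B(0,R+\delta/2)$, the paper a piecewise-linear radial tent supported in $B(0,R+2\delta/3)$) with a mollifier scaled by a fixed fraction of $\delta$, then place all derivatives on the mollifier and read off the $\delta^{-|\alpha|}$ scaling from $\|D^{\alpha}\eta_{\varepsilon}\|_{L^{1}}=\varepsilon^{-|\alpha|}\|D^{\alpha}\eta\|_{L^{1}}$. The paper arrives at $C_0=\sum_{1\le|\alpha|\le 2k}10^{|\alpha|}\|D^{\alpha}\eta\|_{L^{1}}$ where you get $4^{|\alpha|}$, but this is only the choice $\varepsilon=\delta/10$ versus $\varepsilon=\delta/4$.
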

\begin{proof}
 Let $\eta \in C^{\infty}_c(\bn)$ be such that $\eta \geq 0, \ $  $\int_{\bn} \eta(x) \ dx = 1,$ and define 
 \begin{align*}
  \eta_{\epsilon} (x) = \frac{1}{\epsilon^N} \eta \left(\frac{x}{\epsilon}\right), \ \ \mbox{where} \ 
  \epsilon = \frac{\delta}{10}.
 \end{align*}
For the given $R$ and $\delta$ define,
\begin{align*}
 f(t) =
 \begin{cases}
  1, \qquad \qquad \ \ \ \  \ \ \mbox{if} \ 0 \leq t \leq R + \frac{\delta}{3} ,\\
  -\frac{3}{\delta}t + \frac{3R + 2 \delta}{3},\ \ \ \mbox{if} \ R + \frac{\delta}{3} < t \leq R + \frac{2\delta}{3}, \\
  0, \qquad \qquad \ \ \ \ \ \ \ \mbox{if} \ t \geq R + \frac{2\delta}{3} ,
  \end{cases}
\end{align*}
and define $\phi(x) = f(|x|).$ Then one has $\phi$ is Lipschitz, $0 \leq \phi \leq 1, \phi \equiv 1$ on 
$B(0, R + \frac{\delta}{3})$ and $supp(\phi)$ is contained in $B(0, R + \frac{2\delta}{3}).$ Finally define
\begin{align}
 \phi_{R, \delta} (x) = \phi * \eta_{\epsilon} (x).
\end{align}
Then it is easy to see that $\phi_{R, \delta}$ is of class $C^{\infty}, 0 \leq \phi_{R, \delta} \leq 1,$ 
and $supp(\phi_{R, \delta})$ is contained in $B(0, R + \delta).$ 

Now for $|x| < R,$ and $|y| < \epsilon,$ one has $|x - y| < R + \frac{\delta}{3},$ and therefore,
\begin{align*}
 \phi_{R, \delta}(x) &= \int_{B(0, \epsilon)} \phi(x - y) \eta_{\epsilon}(y) \ dy \\
                     &= \int_{B(0, \epsilon)} \eta_{\epsilon} (y) \ dy = 1.
\end{align*}
This proves $(i)$ and $(ii)$ of the lemma. For the last part one can easily check that for any multi-index $\alpha,$
\begin{align*}
 D^{\alpha} \phi_{R, \delta}(x) = \frac{1}{\epsilon^{|\alpha|}} \int_{\bn} \phi(x - \epsilon y)D^{\alpha} \eta(y) \ dy,
\end{align*}
and therefore we have,
\begin{align*}
 \delta^{|\alpha|}||D^{\alpha} \phi_{R, \delta}||_{L^{\infty}}
 \leq 10^{|\alpha|} ||D^{\alpha} \eta||_{L^1(\bn)}.
\end{align*}
Taking $C_0 = \sum_{1 \leq |\alpha| \leq 2k} 10^{|\alpha|} ||D^{\alpha} \eta||_{L^1(\bn)},$ we have the desired 
estimate, and this completes the proof of the lemma.
\end{proof}
 
 \section{Proof of Theorems : Part I}
 \textbf{Proof of Theorem \ref{exact growth theorem 1} :}
 \begin{proof}
 Let $V,U,q_1, \alpha_0$ be as in Lemma \ref{Adams inequality with exact growth lemma 2}. By covering Lemma \ref{basicl5}, 
 we can find a countable collection $\{b_i\}_{i = 1}^{\infty}$ such that $\{\tau_{b_i}(V)\}$ covers $\mathbb{B}^4$,
 and $\{\tau_{b_i}(U)\}$ have finite multiplicity, say $M_0.$ 
  
 Now fix $u \in C^{\infty}_c(\mathbb{H}^4)$ such that $\int_{\mathbb{H}^4} P_2(u)u \ dv_g \leq 1,$ and define,
 \begin{align*}
  \mathcal{B}_u := \{ b_i : ||(u \circ \tau_{b_i})||_{H^2_g(U)} > \alpha_0 \}.
 \end{align*}
Then one can easily check that,
\begin{align} \label{cardinality of Au}
  Card( \mathcal{B}_u) \leq \frac{M_0}{\alpha^2_0} \left[||\Delta_g u||^2_{L^2(\mathbb{H}^4)}
 + ||\nabla_g u||^2_{L^2(\mathbb{H}^4)} + ||u||^2_{L^2(\mathbb{H}^4)}\right].
\end{align}

Now by Lemma \ref{equivalence of norms} there exists a constant $\Theta > 0,$ such that,
\begin{align*}
 ||\Delta_g u||^2_{L^2(\mathbb{H}^4)}
 + ||\nabla_g u||^2_{L^2(\mathbb{H}^4)} + ||u||^2_{L^2(\mathbb{H}^4)} \leq \Theta^2 \int_{\mathbb{H}^4} P_2(u)u \ dv_g 
 \leq \Theta^2.
\end{align*}

Therefore from \eqref{cardinality of Au} we have,
\begin{align*}
 Card( \mathcal{B}_u) \leq \frac{M_0}{\alpha^2_0} \Theta^2 .
\end{align*}
Now let $b_i$ be an element of the collection such that $b_i \notin \mathcal{B}_u.$ Then we have,
\begin{align*}
 ||\frac{1}{\alpha_0}(u \circ \tau_{b_i})||_U \leq 1.
\end{align*}
Therefore by Lemma \ref{Adams inequality with exact growth lemma 2} we have,
\begin{align}
 \int_{V} \frac{e^{\frac{q_1}{\alpha_0^2}(u \circ \tau_{b_i})^2} - 1}{(1 + |u \circ \tau_{b_i}|)^2} \ dx
 &\leq C_1 \int_{U} |u \circ \tau_{b_i}|^2 \ dv_g , \notag \\
 &= C_1 \int_{\tau_{b_i}(U)} u^2 \ dv_g .
\end{align}
Since $\frac{q_1}{\alpha_0^2} = 32 \pi^2,$ we have,
\begin{align*}
 \sum_{b_i \notin \mathcal{B}_u} \int_{\tau_{b_i}(V)} \frac{e^{32 \pi^2 u^2} - 1}{(1 + |u|)^2} \ dv_g 
 &= \sum_{b_i \notin \mathcal{B}_u} \int_{V} \frac{e^{32 \pi^2 (u \circ \tau_{b_i})^2} - 1}{(1 + |u \circ \tau_{b_i}|)^2} \ dv_g \\
 &\leq C \sum_{b_i \notin \mathcal{B}_u} \int_{V} \frac{e^{32 \pi^2 (u \circ \tau_{b_i})^2} - 1}{(1 + |u \circ \tau_{b_i}|)^2} \ dx \\
 &\leq C C_1 \sum_{b_i \notin \mathcal{B}_u} \int_{\tau_{b_i}(U)} u^2 \ dv_g \\
 &\leq \tilde C M_0 ||u||^2_{L^2(\mathbb{H}^4)}.
\end{align*}
Now if $b_i \in \mathcal{B}_u$ then,
\begin{align*}
 \int_{\tau_{b_i}(V)} \frac{e^{32 \pi^2 u^2} - 1}{(1 + |u|)^2} \ dv_g
 &= \int_{V} \frac{e^{32 \pi^2 (u \circ \tau_{b_i})^2} - 1}{(1 + |u \circ \tau_{b_i}|)^2} \ dv_g \\
 &\leq C_2 \int_{V} \frac{e^{32 \pi^2 (u \circ \tau_{b_i})^2} - 1}{(1 + |u \circ \tau_{b_i}|)^2} \ dx \\
 &\leq C_3 ||u \circ \tau_{b_i}||^2_{L^2(\mathbb{B}^4)} \\
 &\leq C_3 ||u||^2_{L^2(\mathbb{H}^4)},
\end{align*}
where we used the fact that $\int_{\mathbb{H}^4} P_2(u)u \ dv_g = \int_{\mathbb{B}^4} |\Delta u|^2 \ dx .$
Therefore adding such finitely many $b_i$'s we get,
\begin{align*}
 \sum_{b_i \in \mathcal{B}_u} \int_{\tau_{b_i}(V)} \frac{e^{32 \pi^2 u^2} - 1}{(1 + |u|)^2} \ dv_g 
 \leq C_4 ||u||^2_{L^2(\mathbb{H}^4)} .
\end{align*}
Since $\{\tau_{b_i}(V)\}^{\infty}_{i = 1}$ covers $\mathbb{B}^4$ we have,
\begin{align*}
 &\int_{\mathbb{H}^4} \frac{e^{32 \pi^2 u^2} - 1}{(1 + |u|)^2} \ dv_g  \\ 
 & \leq \sum_{b_i \in \mathcal{B}_u} \int_{\tau_{b_i(V)}} \frac{e^{32 \pi^2 u^2} - 1}{(1 + |u|)^2} \ dv_g
 +  \sum_{b_i \notin \mathcal{B}_u} \int_{\tau_{b_i(V)}} \frac{e^{32 \pi^2 u^2} - 1}{(1 + |u|)^2} \ dv_g \\
 &\leq C ||u||^2_{L^2(\mathbb{H}^4)}.
\end{align*}
This completes the proof.
\end{proof}

\textbf{Proof of Theorem \eqref{exact growth theorem 2} :}
\begin{proof}
 Let $V,U, q_2$ be as in Lemma \ref{Adams inequality with exact growth lemma 3} and $\{b_i\}_{i = 1}^{\infty}, M_0$
 be as in the proof of Theorem \ref{exact growth theorem 1}.
 
 Fix $\alpha \in (0,32 \pi^2), u \in C^{\infty}_c(\mathbb{H}^4)$ with $\int_{\mathbb{H}^4} P_2(u)u \ dv_g \leq 1,$ 
 and define,
 \begin{align*}
  \mathcal{C}_u := \left\{b_i : ||u \circ \tau_{b_i}||^2_{H^2_g(U)} > \frac{q_2}{\alpha}\right\}.
 \end{align*}
 Then as before one can check that,
\begin{align*}
 Card(\mathcal{C}_u) \leq \frac{\alpha M_0 }{q_2} \Theta^2.
\end{align*}
Therefore  using Lemma \ref{Adams inequality with exact growth lemma 3}, we have for all $b_i \notin \mathcal{C}_u,$
\begin{align*}
 \int_{\tau_{b_i}(V)} \left(e^{\alpha u^2} - 1 \right) \ dv_g
 &= \int_{V} \left(e^{\alpha (u \circ \tau_{b_i})^2} - 1 \right) \ dv_g, \\
 &\leq C \int_{V} \left(e^{q_2 \left(\sqrt{\frac{\alpha}{q_2}}u \circ \tau_{b_i}\right)^2} - 1 \right) \ dx, \\
 &\leq C \int_{U} |u \circ \tau_{b_i}|^2 \ dv_g , \\
 &\leq C \int_{\tau_{b_i}(U)} u^2 \ dv_g .
\end{align*}
Adding all such $b_i$'s we get,
\begin{align*}
 \sum_{b_i \notin \mathcal{C}_u} \int_{\tau_{b_i}(V)} \left(e^{\alpha u^2} - 1 \right) \ dv_g
 \leq CM_0 ||u||^2_{L^2(\mathbb{H}^4)}.
\end{align*}
If $b_i \in \mathcal{C}_u,$ then using $||\Delta (u \circ \tau_{b_i})||_{L^2(\mathbb{B}^4)} \leq 1,$ we have,
\begin{align*}
 \sum_{b_i \in \mathcal{C}_u} \int_{\tau_{b_i}(V)} \left(e^{\alpha u^2} - 1 \right) \ dv_g
 &\leq C \sum_{b_i \in \mathcal{C}_u} \int_{V} \left(e^{\alpha (u \circ \tau_{b_i})^2} - 1 \right) \ dx, \\
 &\leq C\sum_{b_i \in \mathcal{C}_u} ||u \circ \tau_{b_i}||^2_{L^2(\mathbb{B}^4)}, \\
 &\leq C\Big[ Card(\mathcal{C}_u) + 1 \Big] ||u||^2_{L^2(\mathbb{H}^4)}.
\end{align*}
This completes the proof.
\end{proof}

\textbf{Proof of Sharpness of Theorem \ref{exact growth theorem 1} and Theorem \ref{exact growth theorem 2} :}
\begin{proof}
 In order to complete the proof of sharpness of Theorem \ref{exact growth theorem 1} and 
 Theorem \ref{exact growth theorem 2}, it is enough to show that,
 \begin{align} \label{sharpness}
  \int_{\mathbb{H}^4} \frac{(e^{32 \pi^2 u^2} - 1)}{(1 + |u|)^p} \ dv_g \leq C ||u||^2_{L^2(\mathbb{H}^4)}
 \end{align}
does not hold for any $p < 2.$

Suppose if possible \eqref{sharpness} holds. Let us consider the following sequence of functions,
\begin{align}
 v_m(x):=
 \begin{cases}
  \sqrt{\frac{\log m}{16 \pi^2}} + \frac{1}{\sqrt{16 \pi^2 \log m}}(1 - m|x|^2), \ &\mbox{if} \ 0 \leq |x| \ \leq \frac{1}{\sqrt{m}},\\
  \frac{1}{\sqrt{4 \pi^2 \log m}} \log \frac{1}{|x|},  &\mbox{if} \ \frac{1}{\sqrt{m}} < |x| \leq 1, \\
  \xi_m(x),   &\mbox{if} \ |x| \geq 1,
 \end{cases}
\end{align}
where $\xi_m$'s are smooth functions satisfying,
\begin{align*}
\xi_m |_{\partial B_1(0)} = \ & 0 = \xi_m |_{\partial B_2(0)} \\
\frac{\partial \xi_m}{\partial r} |_{\partial B_1(0)} = \frac{1}{\sqrt{4 \pi^2 \log m}} &,
\frac{\partial \xi_m}{\partial r} |_{\partial B_2(0)} = 0,
\end{align*}
and $\xi_m, \Delta \xi_m$ are all $O\left(\frac{1}{\sqrt{\log m}}\right).$

Define $\tilde u_m(x) = v_m (3x),$ for $|x| \leq \frac{2}{3}$ and extend  it by zero outside
$\{|x| \leq \frac{2}{3}\}$. We can easily show that,
\begin{align*}
 ||\tilde u_m||^2_{L^2(\mathbb{H}^4)} = O\left(\frac{1}{\log m}\right),
 \int_{\mathbb{H}^4} P_2(\tilde u) \tilde u \ dv_g = 1 +  O\left(\frac{1}{\log m}\right).
\end{align*}
 Proceeding as in \cite{MSani} we can conclude that, if \eqref{sharpness} holds then,
\begin{align*}
 \limsup_{m \rightarrow \infty} \ \log m \int_{\mathbb{H}^4} \frac{(e^{32 \pi^2 u^2_m} - 1)}{(1 + |u_m|)^p} \ dv_g < +\infty,
\end{align*}
where, $u_m(x) := \frac{\tilde u_m}{\left[\int_{\mathbb{H}^4} P_2(\tilde u_m) \tilde u_m \ dv_g \right]^{\frac{1}{2}}}.$

Whereas, a direct computation gives,
\begin{align*}
 \log m \int_{\mathbb{H}^4} \frac{(e^{32 \pi^2 u^2_m} - 1)}{(1 + |u_m|)^p} \ dv_g
 \geq C \left(\log m \right)^{1 - \frac{p}{2}} \rightarrow + \infty,
\end{align*}
as $m \rightarrow + \infty.$ This completes the proof of sharpness.
\end{proof}
Hence the proof of Theorem \ref{exact growth theorem 1} and Theorem \ref{exact growth theorem 2} is completed.
\pagebreak

\textbf{Proof of Theorem \ref{main pll 2} :}

\begin{proof}
 Fix $p$ satisfying the condition of the theorem. Since $u_m$ converges weakly to $u$ in $H^k(\hn),$ there exists 
 $M > 0$ sufficiently large such that,
 \begin{align}
  ||u_m - u||^2_{k,g} < \frac{1}{p}, \ \ \ \ \ \ \mbox{for all} \ m \geq M.
 \end{align}
 Let $V,U, q$ be as in Lemma \ref{basicl4}. Then by covering Lemma \ref{basicl5}, there exists a collection 
 $\{b_i\}_{i = 1}^{\infty} \subset \bn $ and a positive number $M_0$ such that $\{\tau_{b_i}(V)\}_{i=1}^{\infty}$
 covers $\bn$ and $\{\tau_{b_i}(U)\}_{i=1}^{\infty}$ have multiplicity atmost $M_0.$
 
Now for each $m \geq M,$ let us define
\begin{align}
 \mathcal{S}_m := \{b_i : ||u_m \circ \tau_{b_i}||^2_{H^k_g(U)} > \frac{q}{2\beta_0 p}\}
\end{align}
Then as in \cite{SD} we can show that number of elements in $\mathcal{S}_m$ is uniformly bounded by a constant, say 
$\gamma_0,$ and
\begin{align*}
 \sum_{b_i \notin \mathcal{S}_m} \int_{\tau_{b_i}(V)} \left(e^{\beta_0 p u^2_m} - 1 \right) \ dv_g \leq \tilde C,
\end{align*}
where $\tilde C$ is independent of $m.$

Now assume $b_{i_m} \in \mathcal{S}_m,$ where $m \geq M.$ Then we see that
\begin{align}
 ||(u_m \circ \tau_{b_{i_m}}) - (u \circ \tau_{b_{i_m}})||^2_{k,g}
 = ||u_m - u||^2_{k,g} < \frac{1}{p}.
\end{align}
For simplicity let us write:
\begin{align*}
 v_m := ((u_m \circ \tau_{b_{i_m}}) - (u \circ \tau_{b_{i_m}})) \ \ \mbox{and} \ \ w_m := u \circ \tau_{b_{i_m}}
\end{align*}
Then we can write $e^{(u_m \circ \tau_{b_{i_m}})^2 }$ as
\begin{align*}
 e^{(u_m \circ \tau_{b_{i_m}})^2 } = e^{v_m^2} e^{2v_m w_m} e^{w^2_m}.
\end{align*}
Now 
\begin{align*}
 \int_{\tau_{b_{i_m}}(V)} \left(e^{\beta_0 p u^2_m} - 1\right) \ dv_g
 &\leq C \int_{\bn} e^{\beta_0 p (u_m \circ \tau_{b_{i_m}})^2}  \ dx \\
 &\leq C\left(\int_{\bn} e^{\beta_0 pq_1 v^2_m} \ dx\right)^{\frac{1}{q_1}} I_m,
 \end{align*}
where 
\begin{align*}
 I_m := \left(\int_{\bn}e^{2\beta_0 p q_2 v_m w_m } \ dx\right)^{\frac{1}{q_2}}
 \left(\int_{\bn} e^{2\beta_0 p q_3 w^2_m } \ dx\right)^{\frac{1}{q_3}},
\end{align*}
and $q_1, q_2, q_3$'s are chosen such that $\frac{1}{q_1} + \frac{1}{q_2} + \frac{1}{q_3} = 1, \ ||v_m||_{k,g} < \frac{1}{pq_1}$
for all $m \geq M.$ Now we note that for $1 \leq q < \infty,$
\begin{align} \label{estimateI}
 \int_{\bn} e^{qw^2_m} \ dx &\leq C + C \int_{\hn} \left(e^{qw_m^2} - 1\right) \ dv_g 
  C + C \int_{\hn} \left(e^{qu^2} - 1\right) \ dv_g
 \leq C(q).
\end{align}
Since $\sup_m ||v_m||_{k,g}$ is finite, using \eqref{estimateI} one can easily show that $ I_m $ 
can be bound by a positive constant $C_1$ independent of $m.$
Since number of elements in $\mathcal{S}_m$ is 
at most $\gamma_0$ we conclude that,
\begin{align}
 \sum_{b_i \in \mathcal{S}_m} \int_{\tau_{b_i}(V)} \left(e^{\beta_0 p u_m^2} - 1\right) \ dv_g \leq C_0
\end{align}
where $C_0$ is independent of $m.$ This proves the theorem.
\end{proof}

\section{Proof of Theorems: Part II}

\textbf{Proof of Theorem \ref{main pll 3} :} 
\begin{proof}
In the proof we will not distinguish between the original sequence and its subsequence, one can easily figure out from the context.
Also by standard argument, passing to a subsequence does not effect the main result.
We will also strictly follow the following notation to avoid confusions : for a subset $U$ of $\bn,$ we will write $||h||_{L^q(U, \ dx)}$ 
to denote $L^q$ norm of $h$ with respect to the Lebesgue measure, otherwise it is assumed to be with 
respect to the hyperbolic measure $dv_g$ . Choose $p$ satisfying the condition of the theorem. 

 We divide the proof into two steps : 

\textbf{Step 1:} There exists $R_0 >$ such that
\begin{align}
 \sup_m \int_{\bn \backslash B(0,R_0)} \Phi \left(\beta_0 p |u_m|^{\frac{N}{N - 2}}\right) \ dv_g < + \infty
\end{align}
\textbf{Proof :} Since $p < \left(1 - ||\Delta_g u||^{\frac{N}{2}}_{L^{\frac{N}{2}}(\bn)}\right)^{-\frac{2}{N - 2}},$ we
can find $\epsilon > 0$ small enough such that 
\begin{align} \label{estimate1 p}
 \frac{1}{p^{\frac{N - 2}{N}}} > \Big[1 - \Big(||\Delta_g u||_{L^{\frac{N}{2}}(\bn)} - 2 \epsilon \Big)^{\frac{N}{2}}
 \Big]^{\frac{2}{N}} + \epsilon.
\end{align}
By regularity of the measure we can find an open set $K$ such that $K$ is relatively compact in $\bn$ and 
satisfies,
\begin{align} \label{estimate2 p}
||\Delta_g u||_{L^{\frac{N}{2}}(K)} \geq ||\Delta_g u||_{L^{\frac{N}{2}}(\bn)} - \epsilon.
\end{align}
Therefore \eqref{estimate1 p} and \eqref{estimate2 p} together gives
\begin{align} \label{estimate p}
 \frac{1}{p^{\frac{N - 2}{N}}} > \Big[1 - \Big(||\Delta_g u||_{L^{\frac{N}{2}}(K)} -  \epsilon \Big)^{\frac{N}{2}}
 \Big]^{\frac{2}{N}} + \epsilon.
\end{align}
Again using $||\Delta_g u||_{L^{\frac{N}{2}}(K)} = 
\sup \ \left\{\int_{K} (\Delta_g u) \phi \ dv_g :\phi \in C^{\infty}_c(K), 
||\phi||_{L^{\frac{N}{N - 2}}(K)}\leq 1 \right\}, $
 we can find a $\phi_1 \in C^{\infty}_c(K)$ with $ ||\phi||_{L^{\frac{N}{N - 2}}(K)} \leq 1$
such that 
\begin{align} \label{estimate1 um}
 \int_{K} (\Delta_g u) \phi_1 \ dv_g \geq ||\Delta_g u||_{L^{\frac{N}{2}}(K)} - \frac{\epsilon}{2}.
\end{align}
Now \eqref{estimate1 um} together with the weak convergence gives: there exists a positive integer $m_0$ such that
\begin{align}
 ||\Delta_g u_m||_{L^{\frac{N}{2}}(K)} \geq ||\Delta_g u||_{L^{\frac{N}{2}}(K)} - \epsilon, \ \ \mbox{for all} \ m \geq m_0.
\end{align}
Therefore we have :
\begin{align} \label{estimate2 um}
 ||\Delta_g u_m||^{\frac{N}{2}}_{L^{\frac{N}{2}}(\bn \backslash K)} 
 &= 1 - ||\Delta_g u_m||^{\frac{N}{2}}_{L^{\frac{N}{2}}(K)} \notag \\
 &\leq 1 - \Big(||\Delta_g u||_{L^{\frac{N}{2}}(K)} - \epsilon \Big)^{\frac{N}{2}},
 \ \ \mbox{for all} \ m \geq m_0.
 \end{align}
It is clear from \eqref{estimate2 um} that , for any $r \in (0,1)$ with $K \subset B(0,r)$ we have 
\begin{align*}
 ||\Delta_g u_m||^{\frac{N}{2}}_{L^{\frac{N}{2}}(\bn \backslash B(0,r))}
 \leq 1 - \Big(||\Delta_g u||_{L^{\frac{N}{2}}(K)} - \epsilon \Big)^{\frac{N}{2}},
 \ \ \mbox{for all} \ m \geq m_0.
\end{align*}

Let $C_0$ be the constant as appeared in Lemma \ref{test function}, and choose $\eta > 0$ such that 
\begin{align}
 8(N - 2) C_0 \eta < \epsilon.
\end{align}
Choose $r_0$ such that $B(0,r_0)$ contains $K$ and the following  holds:
\begin{align}
 ||u||_{L^{\frac{N}{2}}(\bn \backslash B(0,r_0))} + ||\nabla_g u||_{L^{\frac{N}{2}}(\bn \backslash B(0,r_0))}
 < \frac{\eta}{2^{\frac{2}{N}}}
\end{align}
Let us take $\delta_0 = \frac{1 - r_0}{2},$ then by Lemma \ref{test function}, there exists $\phi_0$ such that
$\phi_0$ is of class $C^{\infty}, 0 \leq \phi_0 \leq 1, \phi_0 \equiv 1$ on $B(0,r_0),$ supp$(\phi_0) \subset 
B(0, r_0 + \delta_0)$ and there holds :
\begin{align}
 \delta_0||\nabla \phi_0||_{L^{\infty}} + \delta_0^2 ||\Delta \phi_0||_{L^{\infty}} \leq C_0.
\end{align}
Now define $\psi_0 = (1 - \phi_0).$ We will now estimate $||\Delta_g (\psi_0 u_m)||_{L^{\frac{N}{2}}(\bn)}.$ 
 
We note that :
\begin{align} \label{estimate  psium}
 ||\Delta_g (\psi_0 u_m)||_{L^{\frac{N}{2}}(\bn)} &\leq
 ||\psi_0 \Delta_g u_m||_{L^{\frac{N}{2}}(\bn)} + 2|| \langle \nabla_g \psi_0 , 
 \nabla_g u_m \rangle_g||_{L^{\frac{N}{2}}(\bn)} \notag \\
 &+ ||u_m \Delta_g \psi_0||_{L^{\frac{N}{2}}(\bn)}
\end{align}
We will now estimate each term on the right hand side of \eqref{estimate  psium}. First we estimate the second term.
\begin{align}
 || \langle \nabla_g \psi_0 , \nabla_g u_m \rangle_g||^{\frac{N}{2}}_{L^{\frac{N}{2}}(\bn)} 
 &\leq \int_{\bn} |\nabla_g \psi_0|^{\frac{N}{2}}_g |\nabla_g u_m|^{\frac{N}{2}}_g \ dv_g, \notag \\
 &\leq (1 - r_0)^{\frac{N}{2}} ||\nabla \psi_0||^{\frac{N}{2}}_{L^{\infty}} 
 \int_{\{r_0 < |x| < r_0 + \delta_0 \}} |\nabla_g u_m|^{\frac{N}{2}}_g \ dv_g \notag \\
 &\leq 2^{\frac{N}{2}} \delta_0^{\frac{N}{2}}||\nabla \phi_0||^{\frac{N}{2}}_{L^{\infty}}
 \int_{\{r_0 < |x| < r_0 + \delta_0 \}} |\nabla_g u_m|^{\frac{N}{2}}_g \ dv_g \notag \\
 &\leq (2C_0)^{\frac{N}{2}} \int_{\{r_0 < |x| < r_0 + \delta_0 \}} |\nabla_g u_m|^{\frac{N}{2}}_g \ dv_g.
\end{align} 
Now using compact embedding we get : there exists $m_1$ such that for all $m \geq m_1,$
\begin{align}
 || \langle \nabla_g \psi_0 , \nabla_g u_m \rangle_g||^{\frac{N}{2}}_{L^{\frac{N}{2}}(\bn)}
 &\leq (2C_0)^{\frac{N}{2}} \left(\int_{\{r_0 < |x| < r_0 + \delta_0 \}} |\nabla_g u|^{\frac{N}{2}}_g \ dv_g
 + \frac{\eta^{\frac{N}{2}}}{2} \right) \notag \\
&\leq (2 C_0 \eta)^{\frac{N}{2}}. 
\end{align}
Now we will estimate the last term in \eqref{estimate  psium}. Note that on $\{r_0 < |x| < r_0 + \delta_0 \},$
\begin{align}
 |\Delta_g \psi_0| &\leq (1 - r_0)^2||\Delta \psi_0||_{L^{\infty}} + (N - 2)(1 - r_0)||\nabla \psi_0||_{L^{\infty}}, \notag \\
 &\leq 4(N - 2) \left(\delta_0 ||\nabla \psi_0||_{L^{\infty}} + \delta^2_0 ||\Delta \psi_0||_{L^{\infty}} \right), \notag \\
 &\leq 4(N - 2)C_0.
\end{align}
Therefore using compact embedding we get : there exists $m_2$ such that for all $m \geq m_2,$ 
\begin{align}
 ||u_m \Delta_g \psi_0||^{\frac{N}{2}}_{L^{\frac{N}{2}}(\bn)}
 &\leq (4(N - 2)C_0)^{\frac{N}{2}} \left(\int_{\{r_0 < |x| < r_0 + \delta_0 \}} |u|^{\frac{N}{2}} \ dv_g 
 + \frac{\eta^{\frac{N}{2}}}{2}\right) \notag \\
 &\leq [4(N - 2) C_0 \eta]^{\frac{N}{2}}.
\end{align}
Since $\psi_0 \equiv 0$ on $B(0,r_0)$ and $r_0$ was choosen so that $B(0,r_0)$ contains $K$ we have,
\begin{align}
 ||\Delta_g (\psi_0 u_m)||_{L^{\frac{N}{2}}(\bn)} &\leq ||\Delta_g u_m||_{L^{\frac{N}{2}}(\bn \backslash K )} 
 + 8(N - 2)C_0 \eta  \notag \\
 &\leq \left[1 - \Big(||\Delta_g u||_{L^{\frac{N}{2}}(K)} - \epsilon \Big)^{\frac{N}{2}}\right]^{\frac{2}{N}}
 + \epsilon \notag \\
 &\leq \frac{1}{p^{\frac{N - 2}{2}}}.
\end{align}
Therefore using Adams inequality on the hyperbolic space (\cite{Fontana15}) we get 
\begin{align} \label{conclude step 1}
 \sup_{m \geq M} \int_{\bn} \Phi_{2,N} \left(\beta_0 p |\psi_0 u_m|^{\frac{N}{N - 2}}\right) \ dv_g < + \infty,
\end{align} 
where $M = \max \{m_0 , m_1, m_2\}.$ Since $\psi_0 \equiv 1$ for $|x| \geq r_0 + \delta_0,$ choosing $R_0 = (r_0 + \delta_0)$
we get from \eqref{conclude step 1},

\begin{align}
 \sup_m \int_{\bn \backslash B(0,R_0)} \Phi_{2,N} \left(\beta_0 p | u_m|^{\frac{N}{N - 2}}\right) \ dv_g < + \infty.
\end{align}
This completes the proof of step 1. 

\vspace{0.3 cm}

\textbf{Step 2 :} For any $R \in (0,1),$ there holds
\begin{align}
 \sup_m \int_{B(0,R)} e^{\beta_0 p |u_m|^{\frac{N}{N - 2}}} \ dx < +\infty.
\end{align}
\textbf{Proof :} For this step, without loss of generality we can assume that $u_m \in \Cbn$ for all $m.$
We can write $\Delta_g u_m$ as
\begin{align*}
 \Delta_g u_m = \left(\frac{1 - |x|^2}{2}\right)^2 f_m,
\end{align*}
where $f_m = \Delta u_m + (N - 2) \left(\frac{2}{1 - |x|^2}\right) \langle x, \nabla u_m \rangle.$
Let $v_m$ be the solution of the equation :
\begin{align}
 - \Delta v_m &= |f_m|^{*} \ \ \ \mbox{in} \ \bn , \notag \\
 v_m &= 0 \ \ \ \ \ \ \ \  \mbox{on} \ \partial \bn . 
\end{align}

Let $\Delta_g u = \left(\frac{1 - |x|^2}{2}\right)^2 f ,$ then it follows from weak convergence of $u_m$ that
\begin{align*}
 f_m \rightharpoonup f \ \ \ \mbox{in} \ \ L^{\frac{N}{2}}(\bn, dx).
\end{align*}
Since $\{|f_m|\}_m$ is a bounded sequence in  $L^{\frac{N}{2}}(\bn, dx),$ it follows that upto a subsequence
which is still denoted by $|f_m|$ converges to some
$\tilde f$ weakly in $L^{\frac{N}{2}}(\bn, dx).$ \\
\textbf{Claim:} $||\tilde f||_{L^{\frac{N}{2}}(\bn, dx)} \geq ||f||_{L^{\frac{N}{2}}(\bn, dx)}.$ \\
Proof of the claim: Let $\phi \in \Cbn,$ with $\phi \geq 0,$ then
\begin{align*}
 \int_{\bn} (\tilde f - f) \phi \ dx = \lim_{m} \int_{\bn} (|f_m| - f_m) \phi \ dx \geq 0.
\end{align*}
Similarly,
\begin{align*}
 \int_{\bn} (\tilde f + f) \phi \ dx = \lim_{m} \int_{\bn} (|f_m| + f_m) \phi \ dx \geq 0.
\end{align*}
This proves that $\tilde f \geq |f|$ a.e in $\bn,$ and hence
$||\tilde f||_{L^{\frac{N}{2}}(\bn, dx)} \geq ||f||_{L^{\frac{N}{2}}(\bn, dx)}.$  This proves the claim.

Now applying Lemma 2 of \cite{DoMac} we get, $|f_m|^{\#}$ converges to some $g$ point wise a.e in $(0, |\bn|)$ with
\begin{align*}
||g||_{L^{\frac{N}{2}}(0, |\bn|)} \geq ||\tilde f^{\#}||_{L^{\frac{N}{2}}(0, |\bn|)}  \geq ||f||_{L^{\frac{N}{2}}(\bn, dx)} .
\end{align*}
Let us define $g_0(x) = g(\sigma_N |x|^N)$ for $x \in \bn,$ and let $v$ be the solution of the equation
\begin{align}
 - \Delta v &= g_0 \ \ \ \mbox{in} \ \bn  , \notag \\
 v &= 0 \ \ \ \  \mbox{on} \ \partial \bn  .
\end{align}
Then it follows that,
\begin{align*}
 &\Delta v_m \rightarrow \Delta v  \ \  \ \ \mbox{point wise a.e in} \ \bn \ \mbox{and} \\
 &||\Delta v||_{L^{\frac{N}{2}}(\bn, dx)} = ||g_0||_{L^{\frac{N}{2}}(\bn, dx)} \geq
 ||f||_{L^{\frac{N}{2}}(\bn, dx)} \geq	||\Delta_g u||_{L^{\frac{N}{2}}(\bn)}.
\end{align*}
Applying Brezis-Leib lemma we get, for $m$ sufficiently large
\begin{align*}
 ||\Delta (v_m - v)||^{\frac{N}{2}}_{L^{\frac{N}{2}}(\bn, dx)} &= 1 - ||\Delta v||^{\frac{N}{2}}_{L^{\frac{N}{2}}(\bn, dx)} + o_m(1) ,\\
 &\leq 1 - ||\Delta_g u||^{\frac{N}{2}}_{L^{\frac{N}{2}}(\bn)} + o_m(1).
\end{align*}
Choose $p_0$ such that 
\begin{align}
 p < p_0 < \left(1 - ||\Delta_g u||^{\frac{N}{2}}_{L^{\frac{N}{2}}(\bn)}\right)^{-\frac{2}{N - 2}}.
\end{align}
Then for sufficiently large $m$ we get 
\begin{align}
 ||\Delta(v_m - v)||_{L^{\frac{N}{2}}(\bn, dx)} < \frac{1}{p_0^{\frac{N - 2}{N}}}
\end{align}
and therefore we get,
\begin{align} \label{euclidean pll}
 \sup_m \int_{\bn} \left(e^{\beta_0 p_0 |v_m|^{\frac{N}{N - 2}}} - 1 \right) \ dx< + \infty.
\end{align}
Now applying Lemma \ref{main lemma} we get that there exists a $\tau > 0$ such that :
\begin{align*}
 \int_{B(0,R)} \left(e^{\beta_0 p |u_m|^{\frac{N}{N - 2}}} - 1 \right) \ dx ,
 &\leq \int_{B(0,R)} \left(e^{\beta_0 p |v_m + \tau|^{\frac{N}{N - 2}}} - 1 \right) \ dx , \\
 &\leq \int_{\bn} \left(e^{\beta_0 p |v_m + \tau|^{\frac{N}{N - 2}}} - 1 \right) \ dx , \\
 &\leq C\int_{\bn} \left(e^{\beta_0 p_0 |v_m|^{\frac{N}{N - 2}}} - 1 \right) \ dx.
\end{align*} 
This proves that,
\begin{align*}
 \sup_m \int_{B(0,R)} \left(e^{\beta_0 p |u_m|^{\frac{N}{N - 2}}} - 1 \right) \ dx < + \infty,
\end{align*}
and hence
\begin{align*}
 \sup_m \int_{B(0,R)} \left(e^{\beta_0 p |u_m|^{\frac{N}{N - 2}}} - 1 \right) \ dv_g < + \infty.
\end{align*}
Therefore step 1 and step 2 together completes the proof of the theorem.
\end{proof}

\pagebreak

\textbf{Proof of Theorem \ref{main pll1} :}

\begin{proof} As before we will not distinguish between the original sequence and its subsequence.
 Choose $p$ satisfying the condition of the theorem.
 We divided the proof into two steps : \\
 \textbf{Step 1:} There exists $R_0 > 0,$ such that,
 \begin{align}
  \sup_m \int_{\bn \backslash B(0,R_0)} \Phi_{1,N} (\alpha_N p |u_m|^{\frac{N}{N - 1}}) \ dv_g < + \infty.
 \end{align}
The proof of step 1 goes exactly in the same line as in the proof of theorem \ref{main pll 3}, so we omit the proof.

\textbf{Step 2:} For any $R \in (0,1)$
\begin{align}
  \sup_m \int_{B(0,R)} \Phi_{1,N} (\alpha_N p |u_m|^{\frac{N}{N - 1}}) \ dx \leq C(R) < + \infty.
\end{align}
 This is the Euclidean P.L.Lions lemma and follows from \cite{P.L.L} (see also \cite{CCH}).  

Hence, we have
 \begin{align*}
  \sup_m \int_{B(0,R)} \Phi_{1,N} (\alpha_N p |u_m|^{\frac{N}{N - 1}}) \ dv_g 
  &\leq C \sup_m \int_{\bn} e^{\alpha_N p |u_m|^{\frac{N}{N - 1}}} \ dx \\
  &< + \infty.
 \end{align*}
Therefore step 1 and step 2 combined proves the theorem.
\end{proof}

\end{document}